\newtheorem{anyprop}{Anyprop}[section]
\newtheorem{theorem}[anyprop]{Theorem}
\newtheorem{lemma}[anyprop]{Lemma}
\newtheorem{proposition}[anyprop]{Proposition}
\newtheorem{corollary}[anyprop]{Corollary}
\theoremstyle{definition}
\newtheorem{definition}[anyprop]{Definition}
\newtheorem{example}[anyprop]{Example}
\newtheorem{remark}[anyprop]{Remark}
\newtheorem{definitionintro}{Definition}
\newcommand  {\fop}     {\mathfrak{p}}
\newcommand  {\foq}     {\mathfrak{q}}
\newcommand  {\Spec}    {\operatorname{Spec}}
\theoremstyle{remark}
\numberwithin{equation}{section}
\newcommand{\idealq}{{\mathfrak q}}
\newcommand{\idealp}{{\mathfrak p}}
\newcommand{\idealm}{{\mathfrak m}}
\newcommand{\pro}{\varphi}
\begin{document}
\title[ON THE CONNECTEDNESS OF THE SPECTRUM OF FORCING ALGEBRAS]
{ON THE CONNECTEDNESS OF THE SPECTRUM OF FORCING ALGEBRAS}

\author[Holger Brenner]{Holger Brenner}
\author[Danny de Jes\'us G\'omez-Ram\'irez]{Danny de Jes\'us G\'omez-Ram\'irez}

\address{Universit\"at Osnabr\"uck, Fachbereich 6: Mathematik/Informatik,
Albrechtstr. 28a,
49069 Osnabr\"uck, Germany}
\address{Universidad Nacional de Colombia, sede Medell\'in, Escuela de Matem\'aticas, Calle 59 A No 63-20 Oficina 43-106
Medell\'in, Colombia}

\email{hbrenner@uos.de}
\email{dgomezramire@uos.de}


\subjclass{}


\begin{abstract}
We study the connectedness property of the spectrum of forcing algebras over a noetherian ring.
In particular we present for an integral base ring a geometric criterion for connectedness in terms of horizontal and vertical components of the forcing algebra.
This criterion allows further simplifications when the base ring is local, or one-dimensional, or factorial.
Besides, we discuss whether the connectedness is a local property. Finally, we present a characterization of the integral closure of an ideal by means of the universal connectedness
of the forcing algebra.
\end{abstract}

\maketitle
\noindent Mathematical Subject Classification (2010): 13B22, 14A15, 14R25, 54D05 

\smallskip

\noindent Keywords: Forcing algebra, connectedness, integral closure

\section*{Introduction}
Let $R$ be a commutative ring, $I=(f_1,\ldots ,f_n)$ a finitely generated ideal and $f$ an arbitrary element of
$R$. A very natural and important question, not only from the theoretical but also from the computational point
of view, is to determine if $f$ belongs to the ideal $I$ or to some ideal closure of it
(for example to the radical, the integral closure, the solid closure, the tight closure, among others).
To answer this question the concept of a forcing algebra introducted by Mel Hochster in the context of
solid closure \cite{hochstersolid} is important (for more information on forcing algebras see \cite{brennerforcingalgebra},
\cite{brennerbengaluru} and \cite{gomezthesis}):

\begin{definitionintro} 
Let $R$ be a commutative ring, $I=(f_1, \ldots , f_n)$ an ideal and $f \in R$ another element. Then
the \emph{forcing algebra} of these (forcing) data is 
\[  A=R[T_1, \ldots, T_n] /(f_1T_1 + \ldots + f_nT_n+f ) \,  . \]
\end{definitionintro}

Intuitively, when we divide by the forcing equation $f_1T_1 + \ldots + f_nT_n+f $ we are ``forcing'' the element
$f$ to belong to the expansion of $I$ in $A$. Besides, it has the universal property that for any $R$-algebra $S$
such that $f\in IS$, there exists a (non-unique) homomorphism of $R$-algebras $\theta: A\rightarrow S$.
Furthermore, the formation of forcing algebras commutes with arbitrary change of base. Formally, if
$\alpha: R\rightarrow S$ is a homomorphism of rings, then $S\otimes_R A$ is the forcing algebra for the
forcing data $\alpha(f_1),\ldots ,\alpha(f_n),\alpha(f)$ \cite[chapter 2]{brennerbarcelona}. In particular,
if $\idealp \in X =\Spec R$, then the fiber of (the forcing morphism) $\varphi: \Spec A \rightarrow \Spec R$ over $\idealp $, $\varphi^{-1}( \idealp)$, is the scheme theoretical
fiber $\Spec (\kappa (\idealp)\otimes_R A)$, where $\kappa (\idealp)=R_\idealp/ \idealp R_\idealp$ is its residue field.
In this case, the fiber ring $\kappa (\idealp) \otimes_R A$ is the forcing algebra over $\kappa (\idealp)$
corresponding to the forcing data $f_1(\idealp),\ldots ,f_n(\idealp),f(\idealp)$, where we
denote by $g(\idealp)\in \kappa (\idealp)$, the image (the evaluation) of $g\in R$ inside the residue field $\kappa (\idealp)$. 

One can also define forcing algebras by several forcing equations and write them in a matrix form:
\[A=R[T_1,\ldots ,T_n]/\left\langle \left(\begin{array}{ccc} f_{11}& \ldots &f_{1n} \\ \vdots& \ddots & \vdots \\ f_{m1} & \ldots & f_{mn} \end{array}  \right) \cdot \left( \begin{array}{c}T_1 \\ \vdots \\ T_n \end{array}\right)+\left( \begin{array}{c}f_1\\ \vdots \\ f_m\end{array}
\right)   \right\rangle. \]

This corresponds to a submodule $N \subseteq M $ of finitely generated $R$-modules and an element $f \in M$ via a
free representation of these data (see \cite{brennerforcingalgebra}). The fiber over a point $\idealp \in \Spec R$ of this forcing algebra is just the solution set of the
corresponding system of inhomogeneous linear equations over $\kappa (\idealp)$. If the  vector $(f_1, \ldots , f_m)$
is zero, then we are dealing with a ``homogeneous'' forcing algebra. In this case there is a
(zero- or ``horizontal'') section $s: X =\Spec R \rightarrow Y = \Spec A$ coming from the homomorphism of
$R$-algebras from $A$ to $R$ sending each $T_i$ to zero. This section sends a prime ideal $\idealp \in X$ to the
prime ideal $(T_1,\ldots ,T_n)+\idealp \in Y$.

In this paper we discuss when (the spectrum of) a forcing algebra is connected. The fibers of a forcing algebra are affine spaces hence connected
unless they are empty (Lemma \ref{forcingfiber}). In the homogeneous case it is easy to show that if
$X=\Spec R$ is a connected topological space, so is $Y=\Spec A$ (Proposition \ref{connectedprop}(4)).
However, in the non-homogeneous case this question is more subtle. We establish in the noetherian case
(in the setting of an ideal, i.e. $m=1$) a condition equivalent to the connectedness of $Y$
in terms of the horizontal and the vertical (to be defined) irreducible components of $\Spec A$ (Theorem \ref{connectedcomponents}). In the following we specialize then to the cases where $R$ is one-dimensional (Corollary \ref{connectedcomponentsone}),
local (Corollary \ref{connectedcomponentslocal}), a factorial domain
(Corollary \ref{factorialconnected}) and a principal ideal domain (Corollary \ref{PID}).
In the fifth section we study the problem whether the connectedness of a forcing algebra is a local
property over the base. We give a quite general result (Theorem \ref{theoremlocal}) that local connectedness (over the base) implies connectedness.
In the one-dimensonal domain case the converse is true for forcing algebras (Corollary \ref{onedimlocal}), but
this can neither be extended to higher dimension nor to one-dimensional rings which are not domains.
In the final section we relate the integral closure of an ideal to the universal connectedness of the forcing algebra.
 
This paper arose during the research stay of Danny de Jes\'us G\'omez-Ram\'irez at the University of Osnabr\"uck in 2011.
He thanks all the department there for the help and support, in particular, to Daniel Brinkmann for inspiring
discussions on the subject. He also thanks the Universidad Nacional de Colombia and the German Academic Exchange
Service (DAAD) for financial support. Finally, we want to thank the referee for careful reading and many suggestions.

\section{Connectedness}

Recall that a topological space $X$ is connected if there exists exactly two subsets (namely
$\emptyset$ and $X \neq \emptyset$)
which are open as well as closed. A connected component of $X$ is a maximal connected subspace, i.e., 
not strictly contained in any connected subspace of $X$. Every connected component is necesarilly
closed because its closure is a closed connected set containing it. Moreover, the connected components
form a partition of the space $X$. 

For a commutative ring $A$, the spectrum $\Spec A$ is connected if and only if $A \neq 0$ and if it is not possible to write
$A= A_1 \times A_2$ with $A_1,A_2 \neq 0$. Equivalently, there exist exactly
two idempotent elements,
namely $0$ and $1$ (see for example \cite[Exercise 2.19, Chapter II]{hartshornealgebraic} or \cite[Exercise 2.25]{eisenbud}). Hence domains and
local rings are connected. If $A$ is a $\mathbb C$-algebra of finite type, then the connectedness of $\Spec A$ (in the Zariski topology) is equivalent to the connectedness of the complex space $(\Spec A)_{\mathbb C}$ in the complex topology, because irreducible varieties are connected over $\mathbb C$ (the real spectrum might be disconnected though).

Now, we present a lemma describing the fibers of a forcing algebra as affine spaces over the base residue field.

\begin{lemma}
\label{forcingfiber}
 Let $R$ be a commutative ring and let $A$ be the forcing algebra corresponding to the data
$\left\lbrace f_{ij}, f_i \right \rbrace$. Let $\idealp \in X$ be an arbitrary prime ideal of $R$ and $r$ the rank of the
matrix $\left\lbrace f_{ij}(\idealp) \right\rbrace$. Then the fiber over $\idealp $ is empty or isomorphic to the
affine space ${ \mathbb A}^{n-r}_{\kappa (\idealp)}$.
\end{lemma}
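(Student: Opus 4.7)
The plan is to reduce to the case of a field by exploiting base change, and then carry out a standard linear algebra normalization. By the base change property of forcing algebras recalled in the introduction, the fiber ring $\kappa(\idealp) \otimes_R A$ is itself the forcing algebra over the field $K := \kappa(\idealp)$ corresponding to the matrix $M := (f_{ij}(\idealp))$ and the vector $b := (f_i(\idealp))$. So the question becomes purely one about the $K$-algebra $B := K[T_1,\ldots,T_n]/(MT + b)$, with $M$ of rank $r$.

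Over the field $K$, the rank-$r$ matrix $M$ admits a Smith normal form: there exist $P \in \GL_m(K)$ and $Q \in \GL_n(K)$ such that $PMQ$ is the block matrix with an $r \times r$ identity block in the top-left corner and zeroes elsewhere. First I would perform the invertible linear change of variables $T' := Q^{-1}T$, so that $K[T_1,\ldots,T_n] = K[T'_1,\ldots,T'_n]$ as $K$-algebras. Since multiplying the equations on the left by the invertible matrix $P$ does not change the ideal they generate, the defining ideal of $B$ is the same as the ideal generated by the entries of $PMQ \cdot T' + Pb$. Writing $c := Pb = (c_1,\ldots,c_m)$, the equations become
\[ T'_i + c_i = 0 \quad \text{for } 1 \le i \le r, \qquad c_i = 0 \quad \text{for } r+1 \le i \le m. \]

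Now two cases occur. If $c_i \ne 0$ for some $i > r$, then the defining ideal contains the nonzero scalar $c_i$, hence equals the unit ideal, so $B = 0$ and the fiber is empty. Otherwise $c_i = 0$ for all $i > r$, and the remaining relations simply allow us to eliminate the first $r$ variables, yielding $B \cong K[T'_{r+1},\ldots,T'_n]$, which is the coordinate ring of $\AA^{n-r}_K$. This is exactly the dichotomy claimed. There is no real obstacle here beyond verifying that the change of basis by $P$ on the rows really does leave the ideal invariant (immediate from invertibility of $P$) and that the change of variables by $Q$ gives a $K$-algebra isomorphism of the ambient polynomial rings (immediate from invertibility of $Q$).
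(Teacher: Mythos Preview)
Your proof is correct and follows essentially the same approach as the paper: reduce to the field case via base change, then use linear algebra over $\kappa(\idealp)$ to normalize the system and either find a unit in the ideal or eliminate $r$ variables. The only cosmetic difference is that you invoke the full Smith normal form (arbitrary $Q\in\GL_n$ on the variables), whereas the paper speaks only of elementary row operations and column exchanges; both lead to the same conclusion.
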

\begin{proof}
We know by a previous remark that the fiber ring over $P$ is $\kappa (\idealp)\otimes_RA$ which is just 
\[\kappa (\idealp)[T_1,\ldots ,T_n]/\left\langle \left(\begin{array}{ccc} f_{11}(\idealp)&\ldots &f_{1n}(\idealp)\\ \vdots& \ddots &\vdots \\f_{m1}(\idealp)&\ldots &f_{mn}(\idealp)\end{array}  \right) \cdot \left( \begin{array}{c}T_1 \\ \vdots \\ T_n \end{array}\right)+\left( \begin{array}{c}f_1(\idealp)\\ \vdots \\ f_m(\idealp)\end{array}
\right)  \right\rangle \, .\]
Hence we are dealing with a forcing algebra over a field. With the usual techniques of linear algebra
(elementary row operations and exchange of columns) we can bring
the matrix to triangular form without changing (up to isomorphism) the algebra. The linear system has no solution if and only if the
forcing algebra is $0$. Otherwise we can use $r$ rows to eliminate $r$ variables showing that the algebra is
isomorphic to a polynomial algebra in $n-r$ variables.
\end{proof}

Since an affine space is irreducible and hence connected, the preceeding lemma tells us that the fibers of a forcing algebra are connected unless they are empty. The easiest example of an empty forcing algebra is $K[T]/(0T-1)$, where $K$ is a field. A forcing algebra may be connected though some fibers may be empty, an example is given by $K[X,Y][T_1,T_2]/(XT_1+YT_2-1)$.

In the following we will mainly deal with the case where all fibers are non-empty. This is equivalent to say that $f$ belongs to the radical
of the ideal $I$ (or, by definition, to the radical of the submodule $N$, see \cite{brennerforcingalgebra}).

\begin{proposition}
\label{connectedprop}
Let $A$ be a forcing algeba over $R$ with the corresponding morphism $\pro:Y= \Spec A \rightarrow X  = \Spec R $. Then the
following hold.
\begin{enumerate}

\item
The connected components of $Y $ are of the form $\pro^{-1} (Z)$ for suitable subsets
$Z \subseteq X $.

\item
If $\pro : Y \rightarrow X$ is surjective, then these $Z$ are uniquely determined.

\item
If  $\pro : Y \rightarrow X$ is surjective and $Y$ is connected, then $X$ is connected.

\item
If the forcing data are homogeneous, then there is a bijection between the connected components of $X$ and $Y$.
In particular, $X$ is connected if and only if $Y$ is connected.

\item
Suppose that $\pro : Y \rightarrow X$ is (surjective and) a submersion. Then there is a bijection between the
connected components of $X$ and $Y$. In particular, $X$ is connected if and only if $Y$ is connected.
\end{enumerate}
\end{proposition}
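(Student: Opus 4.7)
The plan is to prove the five parts in the order given, each feeding into the next. The uniform ingredient is Lemma \ref{forcingfiber}, which says every non-empty fiber of $\varphi$ is an affine space and hence irreducible and connected.

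For (1), I take a connected component $C \subseteq Y$ and set $Z := \varphi(C)$. The inclusion $C \subseteq \varphi^{-1}(Z)$ is automatic. For the reverse inclusion, let $y \in \varphi^{-1}(Z)$, so $\varphi(y) = \varphi(y')$ for some $y' \in C$. The fiber $\varphi^{-1}(\varphi(y))$ is non-empty (it contains $y$), hence connected by Lemma \ref{forcingfiber}, and it meets $C$ at $y'$; therefore $C \cup \varphi^{-1}(\varphi(y))$ is connected, and by maximality of $C$ this union equals $C$, so $y \in C$. For (2), surjectivity gives $\varphi(\varphi^{-1}(Z)) = Z$ for every $Z \subseteq X$, so the subset $Z$ is recovered from $\varphi^{-1}(Z)$. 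For (3), this is the standard topological fact that the continuous image of a connected space is connected.

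For (4), the zero-section $s \colon X \to Y$ of the homogeneous forcing algebra satisfies $\varphi \circ s = \id_X$, so $\varphi$ is surjective and $s$ is a topological embedding. Given a connected component $C$ of $X$, I would decompose
\[
\varphi^{-1}(C) \;=\; s(C) \;\cup\; \bigcup_{\idealp \in C} \varphi^{-1}(\idealp),
\]
where $s(C)$ is connected (homeomorphic to $C$) and each fiber $\varphi^{-1}(\idealp)$ is connected and contains the point $s(\idealp) \in s(C)$; hence the union is connected. Combining this with part (1), every connected component of $Y$ has the form $\varphi^{-1}(Z)$ for a connected subset $Z \subseteq X$, and the argument just given shows $\varphi^{-1}(C_0)$ is connected for the connected component $C_0 \supseteq Z$ of $X$, forcing $Z = C_0$ by maximality. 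The assignments $C \mapsto \varphi^{-1}(C)$ and $D \mapsto \varphi(D)$ are mutually inverse bijections between the connected components.

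For (5), I interpret \emph{submersion} in the Bourbaki sense of a topological quotient map: $Z \subseteq X$ is open (resp.\ closed) if and only if $\varphi^{-1}(Z)$ is. The idea is to upgrade the saturation argument of (1) from connected components to arbitrary clopens: if $U \subseteq Y$ is clopen and $C$ is any connected component of $Y$ meeting $U$, then $U \cap C$ is clopen in $C$ and hence equals $C$ by connectedness, so $U$ is a union of connected components, and by (1) it is saturated, $U = \varphi^{-1}(\varphi(U))$. The quotient map property then turns $\varphi(U)$ into a clopen of $X$, and together with surjectivity this sets up a bijection between non-trivial clopen decompositions of $Y$ and of $X$, yielding the bijection on connected components. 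The main obstacle is just pinning down the correct interpretation of ``submersion'' and verifying that arbitrary clopens, not only connected components, are saturated under $\varphi$.
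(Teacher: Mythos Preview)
Your proof is correct and follows essentially the same approach as the paper. The paper's argument is terser (for instance, (2) and (3) are declared ``clear'' and (5) is dispatched in three lines by the defining property of a submersion), while you spell out explicitly why clopen subsets of $Y$ are saturated---a step the paper leaves implicit but needs just as much. The use of the zero-section in (4) to connect fibers through $s(X)$ matches the paper's proof exactly.
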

\begin{proof}
(1). By Lemma \ref{forcingfiber} the fibers are connected. Hence a connected component of $Y$ which contains a
point of a fiber contains already the complete fiber. (2) and (3) are clear.

(4). First note that in the homogeneous case there exists an $R$-algebra-homomorphism $A \rightarrow R$ given by
$T_i \mapsto 0$, which induces a section $s:X \rightarrow Y$ given by $\idealp \mapsto (\idealp,T_1, \ldots, T_n)$.
Therefore  $\pro : Y \rightarrow X$ is surjective. Suppose first that $X$ is connected. Then the image $s(X)$ of the section is connected.
Hence two points $Q_1,Q_2 \in Y$ lie in the connected subset $\pro^{-1} ( \pro(Q_1)) \cup s(X) \cup  \pro^{-1} ( \pro(Q_2)) $ and so $Y$ is connected. This argument holds for every subset $Z \subseteq X$, so the statement about the components follows.

(5). Recall that a submersion $\pro:Y \rightarrow X$ between topological spaces is surjective and has the property
that $\pro^{-1}(T)$ is open if and only if $T \subseteq X$ is open. So if $\pro^{-1}(Z)$ is open and closed, then $Z$
itself is open and closed. Hence $Y$ is connected if and only if $X$ is connected. The statement about the
components follows.
\end{proof}

\begin{example}
The conditions in Proposition \ref{connectedprop} (4), (5) are necessary, as the following example shows.
Consider $R=K[X]$, where $K$ is a field, and the nonhomogeneous forcing algebra $A=R[T]/(X^2T-X)$. The minimal primes of $(X^2T-X)$ are $(X)$ and
$(XT-1)$, which are comaximal (since $1= XT -(XT-1)$). So by the Chinese Remainder theorem
$A \cong R[T]/(X)\times R[T]/(X T-1)$ and therefore
\[ \Spec A= \Spec k[T] \uplus \Spec K[X,T]/(X T-1)  \, ,\]
i.e. a disjoint union of a line over a point and a hyperbola dominating the base (its image is the pointed affine line, hence dense). But, $\Spec R$ is the affine
line which is connected. Note that the element $X$ belongs to the radical of $(X^2)$, but does not belong to $(X^2)$ nor to its integral closure.
Hence $\pro: \Spec A \rightarrow \Spec R$ is surjective, but not submersive (see also Section \ref{integral}).
\end{example}

\section{Horizontal and vertical components}

We describe now the irreducible components of the spectrum of a forcing algebra over an integral base in the ideal case. We will identify prime ideals inside $R[T_1, \ldots, T_n]$ minimal over $(f_1T_1+ \ldots + f_nT_n+f)$ with the minimal prime ideals of the forcing algebra  $R[T_1, \ldots, T_n]/(f_1T_1+ \ldots + f_nT_n+f)$.
 
\begin{lemma}
\label{horizontalvertical}
Let $R$ be a noetherian domain, $I=(f_1 , \ldots ,f_n) $ an ideal, $f \in R$ and
$A=R[T_1, \ldots , T_n]/(f_1T_1+ \ldots +f_nT_n+f)$
the forcing algebra for these data. Then the following hold.

\begin{enumerate}

\item
For $I \neq 0$ there exists a unique irreducible component $H \subseteq \Spec A$ with the property of dominating the
base $\Spec R$ (i.e. the image of $H$ is dense).
This component is given (inside $R[ T_1, \ldots , T_n]$) by
\[\idealp = R[ T_1, \ldots , T_n] \cap (f_1T_1+ \ldots + f_nT_n+f) Q(R)[ T_1, \ldots , T_n] \, ,\]
where $Q(R)$ denotes the quotient field of $R$.

\item
All other irreducible components of $\Spec A$ are of the form $V( \idealq  R[ T_1, \ldots , T_n] )$ for
some prime ideal $\idealq \subseteq R$ which is minimal over $(f_1, \ldots , f_n,f)$.

\item
For a minimal prime ideal  $\idealq \subseteq R$ over $(f_1, \ldots , f_n,f)$ the extended ideal
$\idealq   R[ T_1, \ldots , T_n]$ defines a component of $\Spec A$
if and only if $I=0$ or $I \neq 0$ and there exists a polynomial $G\in \idealp $,
$G \not\in \idealq   R[ T_1, \ldots , T_n]  $.
\end{enumerate}
\end{lemma}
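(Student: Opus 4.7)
The plan is to set $F = f_1T_1 + \cdots + f_nT_n + f \in S := R[T_1,\ldots,T_n]$ and to classify the minimal primes of $(F)$ in $S$ according to their contraction to $R$. The degenerate case $I = 0$ and $f = 0$ reduces to $A = S$, a domain, where the only component $V(0)$ is of the vertical form. In all other cases $F$ is a nonzero element of the domain $S$, hence a non-zerodivisor, so by Krull's principal ideal theorem every minimal prime of $(F)$ in $S$ has height exactly one.

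For (1), assuming $I\neq 0$, I would pass to the fraction field: over $Q(R)$ the element $F$ is a nonzero linear polynomial in $Q(R)[T_1,\ldots,T_n]$ and hence irreducible, so $(F)\cdot Q(R)[T_1,\ldots,T_n]$ is prime. Its contraction $\idealp$ to $S$ is a prime ideal containing $F$ with $\idealp\cap R = 0$, because elements of $R\setminus\{0\}$ become units upstairs. Under the localization bijection between primes of $S$ disjoint from $R\setminus\{0\}$ and primes of $Q(R)[T_1,\ldots,T_n]$, the minimal primes above $F$ correspond; since $(F)\cdot Q(R)[T_1,\ldots,T_n]$ is already prime, $\idealp$ is the unique minimal prime of $(F)$ in $S$ with $\idealp\cap R = 0$. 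A component $V(\idealq')$ dominates $\Spec R$ exactly when $\idealq'\cap R = 0$, so $H = V(\idealp)$ is the unique dominating component, and the formula in the statement is precisely the description of the contraction.

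For (2), let $\idealq'$ be a minimal prime of $(F)$ distinct from $\idealp$ (or, when $I = 0$ and $f\neq 0$, any minimal prime over $(F)=(f)$). By (1), or directly in the degenerate case, $\idealq := \idealq'\cap R$ is nonzero. Then $\idealq S$ is a nonzero prime of $S$ contained in $\idealq'$, so it has height $\ge 1$; since $\idealq'$ has height exactly $1$, the chain forces $\idealq S = \idealq'$. In particular $F\in \idealq S$, which means $f_1,\ldots,f_n,f\in \idealq$. If some prime $\idealq_0\subsetneq\idealq$ still contained $(f_1,\ldots,f_n,f)$, then $\idealq_0 S\subsetneq \idealq S=\idealq'$ would also contain $F$, contradicting minimality of $\idealq'$; hence $\idealq$ is minimal over $(f_1,\ldots,f_n,f)$.

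For (3), parts (1) and (2) give a complete list of candidates for irreducible components: the horizontal $\idealp$ (present only when $I\neq 0$) and the vertical $\idealq_0 S$ for $\idealq_0$ minimal over $(f_1,\ldots,f_n,f)$. Fix such a $\idealq$; then $\idealq S$ always contains $F$, and it is a minimal prime of $(F)$ iff no other candidate is strictly contained in it. Two distinct vertical candidates are incomparable (their contractions are both minimal over the same ideal), so the only possible obstruction is $\idealp\subseteq \idealq S$, equivalently the non-existence of a witness $G\in \idealp\setminus \idealq S$. In the case $I = 0$ there is no horizontal $\idealp$, so $\idealq S$ is automatically minimal. I expect the main technical point to be the prime-correspondence under localization in (1) and careful bookkeeping of the degenerate cases where $F$ is zero or constant; the height arguments via Krull are then routine.
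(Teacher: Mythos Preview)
Your argument is correct and follows the same overall strategy as the paper: identify the horizontal prime via contraction from $Q(R)[T_1,\ldots,T_n]$, and show that every other minimal prime over $F$ is extended from a minimal prime of $(f_1,\ldots,f_n,f)$ in $R$. Parts~(1) and~(3) match the paper's proof essentially verbatim.

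There is a mild but genuine difference in part~(2). The paper argues by localizing at each $f_i$: over $R_{f_i}$ the forcing equation becomes a prime element (a change of variables shows $A_{f_i}\cong R_{f_i}[T_1,\ldots,\widehat{T_i},\ldots,T_n]$), so the only minimal prime surviving in $A_{f_i}$ is the horizontal one; hence any other minimal prime $\idealq'$ must contain every $f_i$, and then also $f$. You instead invoke Krull's principal ideal theorem to force $\operatorname{ht}\idealq'=1$, observe that the contraction $\idealq=\idealq'\cap R$ is nonzero by~(1), and conclude $\idealq S=\idealq'$ from the height constraint. Your route is a bit more structural (one height computation replaces $n$ localizations) and yields the extended-ideal form of $\idealq'$ directly; the paper's route is more elementary in that it avoids appealing to Krull and instead exhibits concretely why each $f_i$ lies in $\idealq'$. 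Both are short and standard.
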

\begin{proof}
(i). For $I \neq 0$ the polynomial $ f_1T_1+ \ldots + f_nT_n+f $ is irreducible (thus prime) in
$Q(R) [ T_1, \ldots , T_n] $, hence
the intersection of this principal ideal with $R[ T_1, \ldots , T_n]$ gives a prime ideal in this polynomial
ring and therefore in $R [T_1, \ldots , T_n]/(f_1T_1+ \ldots + f_nT_n+f)$. The minimality is clear, since it holds in
a localization.
Because of $R \cap \idealp= 0$, this component dominates the base. On the other hand, let $\idealp'$ be a minimal
prime ideal that is
minimal over $(f_1T_1 + \ldots +f_nT_n+f)$ and suppose that $R \cap \idealp'= 0$.
Let $h \in \idealp$.
Then there exists $r,s \in R$, $r \neq 0$, and a polynomial $G \in R[T_1, \ldots, T_n] $ such that
$r h=s G (f_1T_1+ \ldots + f_nT_n+f)$. This element belongs to $\idealp'$ and since $r \not\in \idealp'$
we deduce $h \in \idealp'$.
Hence $\idealp'=\idealp$.

(ii). Let $(f_1 T_1 + \ldots + f_nT_n+f) \subseteq \idealq$ be a minimal prime ideal different from $\idealp$.
In every localization $A_{f_i}$ there is only one minimal prime ideal, namely the horizontal component (as in (1)), since
 $f_1 T_1 + \ldots +f_nT_n+f$ is a prime element whenever some $f_i$ is a unit. Therefore
$\idealq \not\in \Spec A_{f_i}$
and hence $f_i \in \idealq$. Because $\idealq$ contains the forcing equation we also deduce $f \in \idealq$.
Hence $(f_1, \ldots ,f_n,f) \subseteq \idealq$
and by the minimality condition $\idealq$ is minimal over the extended ideal $(f_1, \ldots ,f_n,f)R[T_1, \ldots, T_n] $. Therefore $\idealq$ must be the extended ideal of a
minimal prime ideal of $(f_1, \ldots ,f_n,f)$ in $R$ (the minimal prime ideals above $(f_1, \ldots,f_n,f)R$, above
 $(f_1, \ldots,f_n,f) R[T_1, \ldots, T_n]$ and above  $(f_1, \ldots,f_n,f)A$ are in bijection).

(iii). Let $\idealq$ be a minimal prime ideal of $(f_1, \ldots ,f_n,f)$ in $R$. Then $\idealq A$ is a
minimal prime ideal of $\Spec A$ if and only if $\idealp \not \subseteq \idealq A$
(since by (ii) we know there are no other possible minimal prime ideals). 
\end{proof}

We call $H=V(\idealp)$ the \emph{horizontal component} of the forcing algebra and the other components $V(\idealq_j)$
the \emph{vertical components}. If $I=0$ there exist only the vertical components which are in bijection with the
components of $ \Spec R/(f)$. If $n=1$, $R$ is a domain and $f_1 \neq 0$, then the horizontal component is just the closure of the
graph of the rational function
$T=- \frac{f}{f_1}$ inside $\Spec R \times {\mathbb A}^1$.

\begin{remark}
If $R$ is a noetherian domain and $I=(f_1 , \ldots, f_n) \neq 0$, then the horizontal component exists
and the describing prime ideal $\idealp$ has height one in $R[T_1, \ldots, T_n]$. If $\idealq$
is a minimal prime ideal over $(f_1 , \ldots, f_n,f)$ of height one, then the extended ideal
in the polynomial ring has also height one and can therefore not contain the horizontal prime ideal.
Therefore such prime ideals yield vertical components. 
\end{remark}

It is possible that all the $V( \idealq_j)$, where $\idealq_j \supseteq (f_1, \ldots, f_n,f)$ is a minimal prime ideal, lie on the horizontal component.
In this case there exists no vertical component.

\begin{example}
Let $K$ be a field, $R=K[X,Y]$ and consider the forcing algebra $A=R[T]/(XT+Y)$, which is a domain.
Here the only candidate for a vertical component, namely $V(X,Y)$, is not a component of $\Spec A$, because it
lies on the horizontal component.
\end{example}

If the forcing equation has a nice factorization inside the polynomial ring $R[T_1, \ldots, T_n]$,
then we can describe the horizontal and vertical components more explicitly.

\begin{lemma}
\label{forcingprimedecomposition}
Let $R$ be an noetherian integral domain and $A$ be a forcing algebra over $R$ with forcing equation $h=f_1T_1+ \cdots + f_nT_n +f=dh'$,
where $(f_1, \ldots, f_n) \neq 0$, $d\in R$ and where $h'=f_1'T_1+\cdots+f_n'T_n+f'$ is a prime element in $B=R[T_1, \ldots, T_n]$.
Then the horizontal component of $\Spec A$ is $V(h')$ and the vertical components of $\Spec A$ are $V(\foq A)$, where $\foq$ varies over
the minimal prime ideals of $(d)$ in $R$. 
\end{lemma}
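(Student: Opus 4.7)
The plan is to reduce everything to Lemma \ref{horizontalvertical} by exploiting the factorisation $h=dh'$. First I would observe that, since $(f_1,\dots,f_n)\neq 0$, the constant $d$ is nonzero, and comparison of coefficients in the free $R$-module $B:=R[T_1,\dots,T_n]$ forces $f_i=df_i'$ and $f=df'$, so that $(f_1,\dots,f_n,f)=dJ$ with $J=(f_1',\dots,f_n',f')\subseteq R$. I would also record the auxiliary fact $(h')\cap R=(0)$, which follows from a one-line degree argument in $B$.

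For the horizontal component, Lemma \ref{horizontalvertical}(1) describes it as $V(\idealp)$ with $\idealp=B\cap (h)Q(R)[T_1,\dots,T_n]$. Since $d$ is a unit in $Q(R)$, one has $(h)Q(R)[T_1,\dots,T_n]=(h')Q(R)[T_1,\dots,T_n]$. As $(h')$ is prime in $B$ and disjoint from $R\setminus\{0\}$ by the observation above, it is saturated with respect to that multiplicative set, and therefore $\idealp=(h')$, which identifies the horizontal component as $V(h')$.

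For the vertical components, Lemma \ref{horizontalvertical}(2)--(3) identifies them with the $V(\foq A)$ for primes $\foq\subseteq R$ that are minimal over $dJ$ and satisfy $(h')\not\subseteq \foq B$. My next step is to show that such $\foq$ are exactly the minimal primes of $(d)$. In one direction, if $\foq$ is minimal over $dJ$ and $(h')\not\subseteq \foq B$, then $J\subseteq\foq$ is forbidden (otherwise all coefficients of $h'$ lie in $\foq$, so $h'\in\foq B$); hence $d\in\foq$, and minimality of $\foq$ over $dJ$ together with $(d)\subseteq \foq$ forces $\foq$ to be minimal over $(d)$. Conversely, for a minimal prime $\foq$ of $(d)$, Krull's principal ideal theorem yields $\operatorname{ht}\foq=1$ (using that $d\neq 0$ in the Noetherian domain $R$); the only prime strictly below $\foq$ is $(0)$, which cannot contain $dJ$ since $d\neq 0$ and $J\neq 0$, so $\foq$ is minimal over $dJ$ as well.

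The step I expect to demand the most care is the last condition, $(h')\not\subseteq \foq B$, for $\foq$ a minimal prime of $(d)$. My plan there is the following: $(h')$ has height $1$ in $B$ and $\operatorname{ht}(\foq B)=\operatorname{ht}\foq=1$, so any containment would be an equality of height-one primes in the Noetherian ring $B$; but $\foq B$ contains the nonzero element $d\in R$ (and indeed every nonzero element of $\foq$), whereas $(h')$ contains no nonzero constant by the degree observation above, giving the required contradiction. This is the one point where the polynomial structure of $B$ and the nonvanishing of $d$ are both genuinely used.
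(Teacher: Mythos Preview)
Your argument is correct and follows essentially the same route as the paper's proof: identify the horizontal component via the contraction $(h')=B\cap (h')Q(R)[T_1,\dots,T_n]$, then match the vertical components to the minimal primes of $(d)$ using Krull's principal ideal theorem together with the height-one comparison between $(h')$ and $\foq B$ (distinguished by $(h')\cap R=(0)$ versus $\foq B\cap R=\foq\neq(0)$). The only cosmetic difference is that you package $(f_1,\dots,f_n,f)=dJ$ and argue minimality over $dJ$, whereas the paper argues directly with $(f_1,\dots,f_n,f)$; the content is identical.
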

\begin{proof}
Because $h'$ is a prime element we have
 \[(h')R[T_1, \ldots, T_n]  =R[T_1, \ldots, T_n] \cap (h) Q(R)[T_1, \ldots, T_n]  \]
and this gives by Lemma \ref{horizontalvertical} (1) the horizontal component. By Lemma \ref{horizontalvertical} (3) we have to show that the
minimal prime ideals over $(d)$ correspond to the minimal prime ideals over
$J=(f_1, \ldots, f_n,f )$ with the additional property that their extension to $R[T_1, \ldots, T_n]$ does not contain $h'$.

So let $\foq$ be a minimal prime ideal over $(d)$. Then by Krull's theorem the height of $\foq$ is $1$
and hence it is also minimal over $J \subseteq (d)$. Moreover, the height of $\foq R[T_1, \ldots, T_n]$ is also $1$. Besides, $h' \not\in \foq b$,
for otherwise $(0)\subsetneq(h')\subsetneq \foq R[T_1, \ldots, T_n]$ would be a chain of prime ideals of length $2$,
because $(h')\neq\foq B$, since $(h') \cap R=0$ and $\foq R[T_1, \ldots, T_n]  \cap R= \foq$.

Conversely, let $\foq$ denote a minimal prime ideal of $J$ such that $h'$ does not belong to $ \foq R[T_1, \ldots , T_n]$. Assume that $d \not\in \foq$.
Then because of $f_1'd, \ldots , f_nd,f'd \in \foq$ we get immediately $f_1, \ldots , f_n,f \in \foq$ and hence the contradiction $h'\in \foq R[T_1, \ldots, T_n]$.
So we must have $d \in \foq$
and $\foq$ must also be minimal over $(d)$.
\end{proof}

If $R$ is a (not necessarily noetherian) factorial domain (a unique factorization domain), then there exists always a factorization $h=dh'$ with $h'$ a prime element in $B$.
The minimal prime ideals over $(d)$ are given by the prime factors $p$ of $d$, and $pB$ has also height $1$. Hence the argument of the lemma goes
through also in this case. Example \ref{behnkekegelexample} below shows that a forcing equation does not always have a prime decomposition as in the lemma. Then it is more complicated to determine the vertical components.

The following example shows that the irreducible components in the module case are more complicated.
In particular, Lemma \ref{horizontalvertical} (2) is not true.

\begin{example}
Consider over $R=K[X,Y]$ the forcing algebra
\[A= R[T_1,T_2]/( XT_1-XY, YT_2-XY  ) \cong R[T_1,T_2]/ \! \left(\!\! \begin{pmatrix} X &0 \\ 0 &Y \end{pmatrix} \begin{pmatrix}  T_1  \\  T_2 \end{pmatrix} - \begin{pmatrix} XY \\ XY \end{pmatrix} \!\!  \right) \!  . \]
The horizontal component of $\Spec A$ is given by the prime ideal $(T_1-Y,T_2-X)$. The algebra is connected, since this component
is a section. The other minimal prime ideals are $(X,T_2)$, $(Y,T_1)$ and $(X,Y)$. Only the last one is the extension of a prime ideal of the base.
\end{example}

\section{Connectedness results}

The following is our main general connectedness result on forcing algebras.

\begin{theorem}
\label{connectedcomponents}
Let $R$ be a noetherian domain, $I=(f_1 , \ldots ,f_n) $ an ideal $\neq 0$, $f \in R$ and
$A=R[T_1, \ldots , T_n]/(f_1T_1+ \ldots +f_nT_n+f)$
the forcing algebra for these data. Let $H=V(\idealp)$ be the horizontal component of $\Spec A$ and let
$V_j=V(\idealq_j)$, $j \in J$, be the vertical components of $\Spec A$ according to Lemma \ref{horizontalvertical}.
Let $Z_i= \bigcup_{j \in J_i} V_j$ be the connected components of $\bigcup_{j \in J} V(\idealq_j)$. 
Then $\Spec A$ is connected if and only if $H$ intersects every $Z_i$.
\end{theorem}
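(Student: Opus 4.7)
The plan is to exploit the decomposition $\Spec A = H \cup \bigcup_{j \in J} V_j$ coming from Lemma \ref{horizontalvertical}. Since $R$ is noetherian, so is $A$, and therefore the index set $J$ is finite; consequently there are only finitely many connected components $Z_i$, and each $Z_i$, being a finite union of the closed sets $V_j$, is itself closed in $\Spec A$. By construction the $Z_i$ are pairwise disjoint (distinct connected components of the same subspace) and $\Spec A = H \cup \bigsqcup_i Z_i$.

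For the sufficient direction I would argue as follows. The horizontal component $H$ is irreducible, hence connected, and each $Z_i$ is connected by definition. Whenever two connected subspaces of a topological space share a point, their union is connected; so if $H \cap Z_i \neq \emptyset$ for every $i$, then each $H \cup Z_i$ is connected. Since every one of these connected sets contains $H$, their union $H \cup \bigcup_i Z_i = \Spec A$ is a union of connected sets with common nonempty intersection $H$, and hence connected.

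For the necessary direction I would proceed by contraposition. Suppose there is some $i_0$ with $H \cap Z_{i_0} = \emptyset$. Then $Z_{i_0}$ is disjoint from $H$ and, by the pairwise disjointness of the $Z_i$'s, also disjoint from $\bigcup_{i \neq i_0} Z_i$. Consequently the two sets
\[ Z_{i_0} \quad \text{and} \quad H \cup \bigcup_{i \neq i_0} Z_i \]
are closed (finite unions of closed sets), disjoint, and nonempty, and their union is $\Spec A$. This produces a nontrivial clopen decomposition, so $\Spec A$ is disconnected.

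The only subtle point I anticipate is making sure that the list of components behind the $Z_i$'s is the right one: nothing in the argument above uses anything about the $V_j$ except that they are (finitely many) closed connected subsets covering $\Spec A \setminus H$, and that distinct $Z_i$ are disjoint. Both of these are ensured by Lemma \ref{horizontalvertical} together with the noetherian hypothesis, so no genuine obstacle remains; the content of the theorem is really the topological reorganisation of the components into $H$ and the clusters $Z_i$.
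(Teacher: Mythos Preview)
Your argument is correct and is essentially the same as the paper's: both use that, by Lemma \ref{horizontalvertical}, the irreducible components of $\Spec A$ are $H$ together with the finitely many $V_j$, and then read off connectedness from the intersection pattern of these components. The paper compresses this into the single sentence that ``the connected components of a noetherian scheme are just the unions of its irreducible components which intersect chainwise,'' whereas you unwind that fact by hand in the two directions; the content is identical.
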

\begin{proof}
This is clear from Lemma \ref{horizontalvertical} since the connected components of a noetherian scheme are just the unions of its irreducible
components which intersect chainwise. Note also that the $Z_i$ can be determined in $\Spec R$ alone.
\end{proof}

\begin{corollary}
\label{connectedcomponentsone}
Let $R$ be a noetherian domain of dimension $1$, $I=(f_1 , \ldots ,f_n) $ an ideal $\neq 0$, $f \in R$ and
$A=R[T_1, \ldots , T_n]/(f_1T_1+ \ldots +f_nT_n+f)$
the forcing algebra for these data. Let $H=V(\idealp)$ be the horizontal component of $\Spec A$ and let
$V_j=V(\idealq_j)$, $j \in J$, be the vertical components of $\Spec A$ according to Lemma \ref{horizontalvertical}.
Then $\Spec A$ is connected if and only if $H$ intersects every $V_j$.
\end{corollary}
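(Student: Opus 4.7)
The plan is to deduce this directly from Theorem \ref{connectedcomponents} by showing that in the one-dimensional setting every connected component $Z_i$ of $\bigcup_{j \in J} V_j$ consists of a single vertical component $V_j$. Once we know the vertical components are pairwise disjoint, the condition ``$H$ meets every $Z_i$'' reduces to ``$H$ meets every $V_j$'', which is exactly the claim.

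The key observation is that by Lemma \ref{horizontalvertical} (ii), each vertical component has the form $V_j = V(\idealq_j R[T_1,\ldots,T_n])$, where $\idealq_j$ is a minimal prime of the ideal $(f_1,\ldots,f_n,f) \subseteq R$. Since $I \neq 0$, this ideal is nonzero, so every such $\idealq_j$ is a nonzero prime ideal in the noetherian domain $R$ of dimension $1$, hence a maximal ideal. For any two distinct such primes $\idealq_j \neq \idealq_k$ we have $\idealq_j + \idealq_k = R$ by maximality. Extending to $R[T_1,\ldots,T_n]$ and then to $A$ preserves this:
\[ \idealq_j A + \idealq_k A = (\idealq_j + \idealq_k) A = A , \]
so $V_j \cap V_k = V(\idealq_j A + \idealq_k A) = \emptyset$.

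Consequently the decomposition $\bigcup_{j \in J} V_j = \biguplus_{j \in J} V_j$ is a disjoint union of closed sets, each of which is irreducible and therefore connected. Hence each connected component $Z_i$ in the statement of Theorem \ref{connectedcomponents} equals exactly one $V_j$, and the criterion there becomes: $\Spec A$ is connected if and only if $H$ meets every $V_j$. (In the edge case that $(f_1,\ldots,f_n,f) = R$, there are no vertical components at all, the criterion is vacuous, and $\Spec A = H$ is irreducible, hence connected.) There is no real obstacle beyond checking that comaximality in $R$ propagates to $A$, which is immediate.
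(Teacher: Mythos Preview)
Your proof is correct and follows essentially the same approach as the paper: both deduce the result from Theorem \ref{connectedcomponents} by observing that in a one-dimensional domain the minimal primes over the nonzero ideal $(f_1,\ldots,f_n,f)$ are maximal, hence pairwise comaximal, so the vertical components are pairwise disjoint and each $Z_i$ reduces to a single $V_j$. Your version simply spells out the comaximality argument and the passage to $A$ in more detail than the paper does.
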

\begin{proof}
This follows from Theorem \ref{connectedcomponents} since the minimal prime ideals of $I \neq 0$ in a
one-dimensional domain are maximal ideals. These maximal ideals form the connected components of $V(I)$.
\end{proof}

Note that this corollary is not true in higher dimension, see Example \ref{connectedexample} in the next section. We specialize now to the local case.

\begin{corollary}
\label{connectedcomponentslocal}
Let $(R, \idealm)$ be a local noetherian domain, $I=(f_1 , \ldots ,f_n) \subseteq \idealm $ an ideal $\neq 0$, $f \in \idealm $ and
$A=R[T_1, \ldots , T_n]/(f_1T_1+ \ldots +f_nT_n+f)$
the forcing algebra for these data. Let $H=V(\idealp)$ be the horizontal component of $\Spec A$ according to Lemma \ref{horizontalvertical}.
Then $\Spec A$ is connected if and only if $\idealp + (I,f) \neq (1) $ in $R[T_1, \ldots, T_n]$.
\end{corollary}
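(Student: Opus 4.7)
The plan is to apply Theorem \ref{connectedcomponents} after showing that, in the local case, the union $Z = \bigcup_j V_j$ of all vertical components has at most one connected component. First I would observe that every vertical component $V_j = V(\idealq_j R[T_1, \ldots, T_n])$ contains the closed fiber $\varphi^{-1}(\idealm) = V(\idealm A)$, because $\idealq_j \subseteq \idealm$ forces $\idealq_j A \subseteq \idealm A$. The closed fiber is itself nonempty and connected: by Lemma \ref{forcingfiber}, together with the fact that $(I, f) \subseteq \idealm$ makes the forcing equation vanish modulo $\idealm$, it is isomorphic to $\mathbb A^n_{\kappa(\idealm)}$. Hence $Z$ is empty or connected, so by Theorem \ref{connectedcomponents}, $\Spec A$ is connected if and only if $Z = \emptyset$ or $H \cap Z \neq \emptyset$.

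I would then split into two cases according to whether $\idealp \subseteq \idealm R[T_1, \ldots, T_n]$. If yes, then $V(\idealm A) \subseteq H$, so either $Z = \emptyset$ and $\Spec A = H$ is irreducible, or $V(\idealm A) \subseteq H \cap Z$; in both sub-cases $\Spec A$ is connected. Simultaneously, $\idealp + (I, f) R[T_1, \ldots, T_n] \subseteq \idealm R[T_1, \ldots, T_n] \subsetneq R[T_1, \ldots, T_n]$, so the claimed inequality $\idealp + (I, f) \neq (1)$ also holds. If instead $\idealp \not\subseteq \idealm R[T_1, \ldots, T_n]$, then for every minimal prime $\idealq$ of $(I, f)$ one has $\idealp \not\subseteq \idealq R[T_1, \ldots, T_n]$ (otherwise $\idealp \subseteq \idealq R[T_1, \ldots, T_n] \subseteq \idealm R[T_1, \ldots, T_n]$, a contradiction), so Lemma \ref{horizontalvertical} (3) shows that every such $\idealq$ produces a vertical component. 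Hence $Z$ is exactly $V((I, f) A)$ and
\[ H \cap Z \;=\; V\bigl(\idealp + (I, f) R[T_1, \ldots, T_n]\bigr), \]
which is nonempty precisely when $\idealp + (I, f) \neq (1)$; this yields the desired equivalence.

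The main subtlety is the first case, where the horizontal prime already lies inside the extension of the maximal ideal: there some minimal primes of $(I, f)$ may be absorbed into $H$ and fail to give independent vertical components, but then the closed fiber itself sits on $H$ and both sides of the equivalence become automatically true. The second case is the typical geometric picture where the vertical components genuinely stick out from $H$, and the equivalence reads off directly from the ideal-theoretic criterion.
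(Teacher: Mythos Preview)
Your proof is correct and follows essentially the same strategy as the paper: both use locality to see that the vertical components form a single connected block (the paper via pairwise intersections $V_i\cap V_j\neq\emptyset$, you via the common closed fiber $V(\idealm A)$) and then apply Theorem \ref{connectedcomponents}. The only difference is the case dichotomy: the paper splits on whether some $V(\idealq_j)$ is absorbed into $H$ (i.e.\ $\idealp\subseteq\idealq_jR[T_1,\ldots,T_n]$ for some $j$), whereas you split on the slightly coarser condition $\idealp\subseteq\idealm R[T_1,\ldots,T_n]$; both choices work, and your version has the mild advantage that the ``hub'' is a single explicit object (the closed fiber) rather than an unspecified $V_j$.
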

\begin{proof}
Let $\idealq_j$, $j \in J$, be the minimal prime ideals of $(I,f)$, disregarding whether the $V_j=V(\foq_j R[T_1, \ldots ,T_n])$
give rise to vertical components of $\Spec A$ or not. Note that $V_j \cap V_i \neq \emptyset$ for all $i,j$, because we are over a local ring.
Suppose first that for at least one $j$ we have $V_j=V( \idealq_j) \subseteq H$. Then also $V_i \cap H \neq \emptyset$ and hence the forcing algebra is connected by Theorem \ref{connectedcomponents}. But this assumption also means that $ \idealp \subseteq \idealq_j R[T_1, \ldots, T_n] \subseteq \idealm R[T_1, \ldots, T_n]$ and therefore $\idealp + (I,f) \neq (1)$, because $\idealm$ does not extend to the unit ideal. So under this assumption the two properties are equivalent.

So suppose next that $V_j \not\subseteq H$ for all $j$, meaning that all $V_j$ are vertical components of $\Spec A$. The subsets $V(\idealq_j)$ inside the local ring  $\Spec R$ are connected, hence there is exactly one $Z$ in the notation of 
Theorem \ref{connectedcomponents}. By this theorem, $\Spec A$ is connected if and only if $H \cap Z \neq \emptyset $.
Because of $Z=V (\bigcap_j \idealq_j)$ this is equivalent to $ \idealp + (I,f) \neq (1)$.
\end{proof}

\begin{example}
Let $K$ be a field and set $R=K[X,Y]_{(X,Y)}$, let $A=R[T]/(XYT-X)$. The horizontal component is $V(YT-1)$ and the only vertical component is $V(X)$. Because they intersect (or because $(YT-1, X) \neq (1)$) the forcing algebra is connected. However, we have $(YT-1, \idealm)=(1)$, so the connectedness over a local ring does not imply that the horizontal component meets the fiber over the maximal ideal.
\end{example}

\section{Connectedness over factorial domains}

We deal now with the case where $R$ is a factorial domain. Note that if $R$
is factorial, then $B=R[T_1, \ldots, T_n]$ is factorial as well. So if $h=f_1T_1+\ldots +f_nT_n+f \in B$ is a forcing
equation, then one can factor out a greatest common divisor of all the coefficients $f_1, \ldots ,f_n$ and $f$,
say $d$, and obtain a representation of $h$ as a product of an element $d$ in $R$ and an irreducible polynomial
$h'= (f_1/d)T_1+ \ldots +(f_n/d)T_n+(f/d)$ in $B$ (for $n \geq 1$), which generates a prime ideal because $B$ is a factorial domain.
This hypothesis appeared already in Lemma \ref{forcingprimedecomposition} and is also crucial in the following sufficient condition for connectedness.

\begin{corollary}
\label{Zusam}
Let $R$ be a noetherian domain, $B:=R[T_1,\ldots ,T_n]$, and let (assume $(f_1, \ldots, f_n) \neq 0$)
\[h := f_1T_1+\ldots +f_nT_n+f= d(f'_1T_1+\ldots +f'_nT_n+f') \]
be a forcing equation such that  $h':=f'_1T_1+\ldots +f'_nT_n+f'$ is a prime polynomial.
Suppose that $(f'_1,\ldots ,f'_n)$ is not contained in any minimal prime ideal of $(d)$. Then $\Spec A$ is
connected, where $A=B/(h)$.
\end{corollary}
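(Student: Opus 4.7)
The plan is to apply Lemma \ref{forcingprimedecomposition} to identify the irreducible components of $\Spec A$, then verify the intersection condition of Theorem \ref{connectedcomponents} by a short reduction modulo each minimal prime of $(d)$.

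First I would dispose of the trivial case when $d$ is a unit in $R$: then $(d)$ has no minimal prime ideals (so the hypothesis is vacuous), $h$ and $h'$ are associates in $B$, and $A \cong B/(h')$ is a domain because $h'$ is prime, hence $\Spec A$ is irreducible and therefore connected. So in what follows I may assume $d$ is not a unit.

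Next I would invoke Lemma \ref{forcingprimedecomposition}: the horizontal component of $\Spec A$ is $H = V(h' A)$, and the vertical components are exactly $V_j = V(\foq_j A)$ as $\foq_j$ ranges over the minimal prime ideals of $(d)$ in $R$. By Theorem \ref{connectedcomponents}, it suffices to show that $H$ meets every $V_j$ (this is stronger than meeting every connected component $Z_i$ of $\bigcup_j V_j$).

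Fix a minimal prime $\foq$ of $(d)$. Since $h = d h' \in (h')$, we have $V(h' A) \cap V(\foq A) \neq \emptyset$ in $\Spec A$ if and only if the ideal $(h') + \foq B$ is a proper ideal of $B$. Suppose for contradiction that $(h') + \foq B = B$. Reducing modulo $\foq$ in $\bar{B} := (R/\foq)[T_1,\ldots,T_n]$ gives that the image $\bar{h'} = \bar{f_1'}T_1+\cdots+\bar{f_n'}T_n+\bar{f'}$ is a unit in $\bar{B}$. Since $R/\foq$ is an integral domain, the units of $\bar{B}$ are precisely the units of $R/\foq$, so all coefficients of the $T_i$ in $\bar{h'}$ must vanish: $f_i' \in \foq$ for $i=1,\ldots,n$. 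This contradicts the hypothesis that $(f_1',\ldots,f_n')$ is not contained in any minimal prime of $(d)$.

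The proof is therefore essentially a two-line reduction; the main conceptual step is recognizing that the hypothesis on $(f_1',\ldots,f_n')$ is exactly what is needed to keep $\bar{h'}$ a non-constant linear polynomial over the domain $R/\foq$, which by itself precludes invertibility. No serious obstacle is expected, as both Lemma \ref{forcingprimedecomposition} and Theorem \ref{connectedcomponents} are already available.
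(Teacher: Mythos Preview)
Your proof is correct and follows essentially the same approach as the paper: both invoke Lemma~\ref{forcingprimedecomposition} to identify the horizontal and vertical components, then verify the intersection condition of Theorem~\ref{connectedcomponents} by reducing modulo each minimal prime $\foq$ of $(d)$ and observing that $h'$ cannot become a unit since some $f_i'$ survives. The only cosmetic difference is that the paper passes to the residue field $\kappa(\foq)$ while you work over the domain $R/\foq$; both arguments are equivalent here.
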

\begin{proof}
By Lemma \ref{forcingprimedecomposition}, the horizontal component of $\Spec A$ is $V(h')$ and the vertical
components correspond to the minimal prime ideals
$\foq$ over $(d)$. We will show that $V(\idealq)$ intersects the horizontal component. 

This can be established after the base
change $R \rightarrow \kappa(\idealq)$. Now at least one of the $f_i'$ becomes a unit in $\kappa(\idealq)$ and
therefore $h'$ is not a unit over $\kappa(\idealq)$. So $\kappa(\idealq)[T_1, \ldots, T_n]/(h') \neq 0$.
\end{proof}

Note also that if $d$ is a unit, then $h=h'$ is a prime polynomial by assumption and then the forcing algebra is integral,
hence connected anyway.

Now, we shall deduce a Corollary in the case that $R$ is a factorial domain. In this kind of rings we can define a greatest
common divisor of a finite set of elements $a_1,\ldots ,a_m$, denoted by $\operatorname{gcd}(a_1,\ldots ,a_m)$, using the prime
factorization, and it is well defined up to a unit in $R$ and defined as the unity in $R$ in the case that the
elements have no irreducible common factor.

\begin{lemma}
\label{factorialhorizontal}
Let $R$ be a factorial domain, $f_1, \ldots , f_n, f \in R$ with some $f_i \neq 0$ and let $d$
be a greatest common divisor of $f_1,\ldots ,f_n$ and $f$ and write
\[ h= f_1T_1+\ldots +f_nT_n+f =  d  (f'_1T_1+\ldots +f'_nT_n+f') \]
where
$ f'_i = f_i/d $. Then $h' = f'_1T_1+\ldots +f'_nT_n+f'$ is an irreducible polynomial and describes the horizontal component of $\Spec R[T_1, \ldots, T_n]/(h)$.
\end{lemma}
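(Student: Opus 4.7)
The plan is to verify the two assertions of the lemma separately: first that $h'$ is irreducible in $B := R[T_1, \ldots, T_n]$, and second that $V(h')$ is the horizontal component of $\Spec A$ where $A = B/(h)$.

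For the irreducibility of $h'$ I would appeal to Gauss's lemma: since $R$ is a UFD, so is $B$, and a non-unit element of $B$ is irreducible if and only if it is primitive (its content in $R$ is a unit) and irreducible in $Q(R)[T_1, \ldots, T_n]$. The content of $h'$ is $\operatorname{gcd}(f'_1, \ldots, f'_n, f')$, and this must be a unit, for if some prime $p \in R$ divided every $f'_i$ and also $f'$, then $dp$ would be a common divisor of $f_1, \ldots, f_n, f$, contradicting the fact that $d$ is their greatest common divisor. Hence $h'$ is primitive. On the other hand, because some $f_i$ is nonzero, the corresponding $f'_i = f_i/d$ is also nonzero in $Q(R)$, so $h'$ is a polynomial of total degree exactly $1$ over $Q(R)$; any such polynomial is irreducible in $Q(R)[T_1, \ldots, T_n]$, since a nontrivial factorization over a field would force a total degree of at least $2$. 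Combining these observations, $h'$ is irreducible in $B$ and hence prime.

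For the identification of the horizontal component I would invoke Lemma \ref{forcingprimedecomposition} directly: with $h'$ now known to be prime in $B$ and the standing assumption $(f_1, \ldots, f_n) \neq 0$, that lemma gives that the horizontal component of $\Spec A$ is $V(h')$. (The argument also applies in the non-noetherian factorial setting, as noted after Lemma \ref{forcingprimedecomposition}.) Alternatively, appealing directly to Lemma \ref{horizontalvertical}(1), since $d$ is a unit in $Q(R)$ the ideals $(h)$ and $(h')$ coincide in $Q(R)[T_1, \ldots, T_n]$, and since $h'$ is prime in $B$ with $(h') \cap R = 0$, one obtains
\[ B \cap h\, Q(R)[T_1, \ldots, T_n] = B \cap h'\, Q(R)[T_1, \ldots, T_n] = (h'), \]
which is exactly the horizontal prime ideal described in Lemma \ref{horizontalvertical}(1). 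The only step requiring any care is the Gauss's lemma verification, but this is routine in the multivariate UFD setting, so there is no serious obstacle.
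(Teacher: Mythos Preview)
Your proposal is correct and follows essentially the same route as the paper. The paper argues directly that in any factorization $h'=h_1h_2$ in $B$ one factor must lie in $R$ (since $h'$ has total degree $1$) and then must be a unit because the $f'_i,f'$ share no common irreducible factor; you phrase the same two ingredients through Gauss's lemma (primitivity plus irreducibility over $Q(R)$), and both finish by invoking Lemma~\ref{forcingprimedecomposition}.
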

\begin{proof}
Suppose we have a factorization $h'=h_1h_2$ in $B=R[T_1, \ldots, T_n]$. Then one of the
$h_i$ can not contain any variable $T_j$, say $h_1$, thus $h_1\in R$. Therefore, $h_1$ divides each
$f_1/d,\ldots ,f_n/d,f/d$ and therefore it is a unit in $R$, because these elements have no
common irreducible factors. Thus $h'$ is an irreducible polynomial and hence a prime element since the polynomial ring
$B$ over $R$ is also factorial. Therefore $(h')$ describes the horizontal component by Lemma \ref{forcingprimedecomposition}. 
\end{proof}

\begin{corollary}
\label{factorialconnected}
Let $R$ be a factorial domain, $f_1, \ldots , f_n,f \in R$ with some $f_i \neq 0$ and let $d$
be a greatest common divisor of $f_1,\ldots ,f_n$ and $f$. Let
\[ h=f_1T_1+\ldots +f_nT_n+f =d ( f'_1T_1+\ldots +f'_nT_n+f') \]
be the forcing equation and let $d=p_1 \cdots p_k$ be a prime factorization of $d$. Suppose that $\{1, \ldots, k\}= \biguplus_{i \in I} J_i$ such that the $\bigcup_{j \in J_i} V(p_j)$ are the connected components of $V(d)$.
Then $\Spec B/(h)$ is connected if and only if for every $i$ there exists some $j \in J_i$ such that $(  f'_1T_1+\ldots +f'_nT_n+f', p_j) \neq (1)$.
\end{corollary}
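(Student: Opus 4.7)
My plan is to combine Theorem \ref{connectedcomponents} with the explicit descriptions of the horizontal and vertical components provided by Lemmas \ref{forcingprimedecomposition} and \ref{factorialhorizontal}. To begin, Lemma \ref{factorialhorizontal} yields that $h'$ is a prime element of $B$ and that the horizontal component of $\Spec A$ is $H = V(h'A)$. Lemma \ref{forcingprimedecomposition} then identifies the vertical components as the $V(\foq A)$ with $\foq$ ranging over the minimal primes of $(d)$ in $R$; since $R$ is factorial these are precisely the (distinct) ideals $(p_j)$ appearing in the prime factorization $d = p_1 \cdots p_k$.

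The key intermediate step is to identify the connected components $Z_i$ of $\bigcup_j V(p_j A)$ inside $\Spec A$ that Theorem \ref{connectedcomponents} refers to. I would show that two vertical components $V(p_j A)$ and $V(p_l A)$ meet in $\Spec A$ if and only if $V(p_j)$ and $V(p_l)$ meet in $\Spec R$: the intersection in $\Spec A$ is non-empty iff $(p_j, p_l, h) B \neq B$, and since $p_j \mid d$ implies $h = d h' \in (p_j) \subseteq (p_j, p_l)$, this ideal equals $(p_j, p_l) B$, which is proper in $B$ iff $(p_j, p_l)$ is proper in $R$. Chainwise connectedness therefore transfers, and the components $Z_i$ are precisely the groupings $\bigcup_{j \in J_i} V(p_j A)$ specified by the partition in the statement.

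With these two steps in hand, Theorem \ref{connectedcomponents} tells me that $\Spec A$ is connected iff $H$ meets every $Z_i$, that is, iff for every $i \in I$ there exists some $j \in J_i$ with $H \cap V(p_j A) \neq \emptyset$. This non-emptiness translates to $(h', p_j, h) B \neq B$, and since $h = d h' \in (h')$ the term $h$ is redundant, leaving the condition $(h', p_j) \neq (1)$ in $B$, exactly as asserted.

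The main obstacle is really just the second paragraph: convincing oneself that the intersection pattern of the vertical components inside $\Spec A$ is faithfully reflected by the intersection pattern of the $V(p_j)$ in $\Spec R$. Once one notices that $h \in (p_j)$ for every $j$ (because $p_j \mid d$ and $h = d h'$), this transfer becomes routine, and the remainder reduces to a mechanical application of the earlier results.
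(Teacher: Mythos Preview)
Your proof is correct and follows the same route as the paper's: identify the horizontal component via Lemma~\ref{factorialhorizontal}, the vertical components via Lemma~\ref{forcingprimedecomposition}, and conclude with Theorem~\ref{connectedcomponents}. The paper's own proof is a two-line citation of these results; you go further by explicitly justifying that the connected components of $\bigcup_j V(p_jA)$ in $\Spec A$ match those of $V(d)$ in $\Spec R$, a point the paper only alludes to parenthetically in the proof of Theorem~\ref{connectedcomponents} (``the $Z_i$ can be determined in $\Spec R$ alone'').
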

\begin{proof}
By Lemma \ref{forcingprimedecomposition} and Lemma \ref{factorialhorizontal}, $\idealp=( f'_1T_1+\ldots +f'_nT_n+f' )$ describes
the horizontal component and every vertical component corresponds to a prime factor of $d$.
Hence the statement follows directly from Theorem \ref{connectedcomponents}.  
\end{proof}

The condition that $( f'_1T_1+\ldots +f'_nT_n+f' , p) \neq (1)$ is true if $p$ does not
divide all $f_1', \ldots, f_n'$ or if $f'$ is not a unit modulo $p$.

\begin{example}
Let $R=K[X,Y]$ over a field $K$. For $h=X(XT+Y)$ we have $d=X=p$. Because $Y$ is not a unit modulo $X$, the forcing algebra is connected.
For $h=X(XT+X+1)$ we also have $d=X=p$. Now $X+1$ is a unit modulo $X$ and the forcing algebra is not connected. For $h=XY(XT_1+YT_2+f')$ we have
$d=XY$, but $f'_1=X$, $f'_2=Y$ do not have a common prime factor and hence the forcing algebra is connected.
\end{example}

\begin{corollary}
\label{PID}
Let $R$ be a factorial domain, $B=R[T_1,\ldots ,T_n]$, $h=f_1T_1+\ldots +f_nT_n+f$ a forcing equation and $A=B/(h)$. Let $d$
be a greatest common divisor of $f_1,\ldots ,f_n$ and $f$, and assume that at least one of the $f_i \neq 0$.
Suppose that $\operatorname{gcd}(d,f_1/d,\ldots ,f_n/d)=1$, then $\Spec A$ is connected.
Moreover, if $R$ is a principal ideal domain then the condition $\operatorname{gcd}(d,f_1/d,\ldots ,f_n/d)=1$ is equivalent to $\Spec A$ being connected.
\end{corollary}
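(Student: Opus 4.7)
The plan is to apply the factorial-domain characterization of Corollary \ref{factorialconnected} directly, translating the condition $\gcd(d, f_1/d, \ldots, f_n/d) = 1$ into the non-triviality of $(h', p_j)$ for each prime factor $p_j$ of $d$. Writing $h = d \cdot h'$ with $h' = (f_1/d) T_1 + \cdots + (f_n/d) T_n + (f/d)$, Lemma \ref{factorialhorizontal} gives that $h'$ is an irreducible polynomial describing the horizontal component of $\Spec A$, and Lemma \ref{forcingprimedecomposition} says the vertical components correspond to the prime factors $p_1, \ldots, p_k$ of $d$. So the whole analysis boils down to computing the reduction $h' \bmod p$ for each prime factor $p$ of $d$.

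For the sufficiency, suppose $\gcd(d, f_1/d, \ldots, f_n/d) = 1$. I would pick any prime factor $p_j$ of $d$ and observe that by assumption some coefficient $f_i/d$ is not divisible by $p_j$. Therefore in $(R/p_j)[T_1, \ldots, T_n]$ the image of $h'$ is a non-constant linear polynomial, and since $R/p_j$ is a domain (because $p_j$ is prime in the factorial ring $R$), units in $(R/p_j)[T_1, \ldots, T_n]$ are constants, so $\bar h' $ is not a unit, i.e.\ $(h', p_j) \neq (1)$ in $B$. This holds for every $j$, so in particular for at least one $j$ in each connected component $J_i$ of $V(d)$, and Corollary \ref{factorialconnected} yields the connectedness of $\Spec A$.

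For the converse in the PID case, assume $R$ is a principal ideal domain and that $\gcd(d, f_1/d, \ldots, f_n/d) \neq 1$; pick a prime $p$ dividing $d$ and every $f_i/d$. Because $d = \gcd(f_1, \ldots, f_n, f)$, the elements $f_1/d, \ldots, f_n/d, f/d$ have trivial gcd, so $p$ cannot also divide $f/d$. Then $h' \equiv f/d \pmod p$, and since $R$ is a PID the nonzero prime $p$ is maximal, so $R/p$ is a field in which $f/d$ is a unit; hence $(h', p) = (1)$ in $B$. The decisive extra input in the PID case is that distinct nonzero primes give disjoint closed points of $\Spec R$, so each $V(p_j)$ is by itself a connected component of $V(d)$, i.e. $J_i = \{i\}$ in the notation of Corollary \ref{factorialconnected}. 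The condition of that corollary therefore \emph{fails} for the component $\{p\}$, and $\Spec A$ is disconnected.

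The main obstacle is a conceptual one rather than computational: one must carefully distinguish the gcd condition $\gcd(d, f_1/d, \ldots, f_n/d) = 1$ (which ignores $f/d$) from $\gcd(f_1/d, \ldots, f_n/d, f/d) = 1$ (which is automatic by the choice of $d$), and use the former for the ``non-unit modulo $p$'' argument while invoking the latter to conclude $p \nmid f/d$ in the converse. A secondary subtle point is why the converse requires $R$ to be a PID and not merely factorial: in higher-dimensional factorial rings the sets $V(p_j)$ may meet in a connected component, so the condition of Corollary \ref{factorialconnected} only demands \emph{some} $j \in J_i$ with $(h', p_j) \neq (1)$, and the existence of one bad prime no longer forces disconnectedness.
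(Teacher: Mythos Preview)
Your proposal is correct and follows essentially the same approach as the paper: write $h=dh'$, argue that under the gcd hypothesis each prime factor $p$ of $d$ leaves some $f_i'$ nonzero so $h'$ is not a unit modulo $p$, and in the PID converse pick a prime $p$ dividing $d$ and all $f_i'$, use $\gcd(f_1',\ldots,f_n',f')=1$ to get $p\nmid f'$, and conclude $h'$ becomes a unit in the field $R/(p)$. The only cosmetic difference is that for the disconnectedness conclusion the paper cites Corollary~\ref{connectedcomponentsone} directly, whereas you invoke the ``only if'' direction of Corollary~\ref{factorialconnected} after noting that in a PID each $V(p_j)$ is its own connected component of $V(d)$; these are the same argument.
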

\begin{proof}
We write $h=d h'$, where $h'= f_1' T_1+\ldots +f_n' T_n+f' $ where $f_i'=f_i/d$ and $f'=f/d$.
Let $p$ be a prime factor of $d$. Then $p$ does not divide some $f'_i$.
But then $f'_1T_1 + \ldots + f'_nT_n +f'$ is not a unit modulo $p$, and the condition of Corollary \ref{factorialconnected}
holds even for every $p$.

Finally, we assume that $R$ is a principal ideal domain and that
$(d,f_1' ,\ldots ,f_n')=(e)$, where $e\in R$ is not a unit. Let $p\in R$ be a prime element dividing $e$. We still work with the factorization
$h=dh'$, where $h'$ is irreducible and describes the horizontal component.
The elements $ f_1/d,\ldots ,f_n/d, f/d$ do not have a common prime
factor, hence $p$ does not divide $f/d$. Therefore in the field $R/(p)$ the element $f/d$ becomes a unit $u$
and the polynomial $h'$ becomes
$0T_1+ \ldots + 0 T_n + u$. Therefore the horizontal component $V(h')$ and the vertical component $V(p)$ are disjoint and the forcing
algebra is not connected by Corollary \ref{connectedcomponentsone}.
\end{proof}

\begin{example}
\label{connectedexample}
The condition of being a principal ideal domain for $R$ in the last part of Corollary \ref{PID} is necessary,
as the following example shows. With the notation from above we consider the following setting, where
$K$ denotes an arbitrary field: $R:=K[X,Y]$, $B=R[T]$, $h=X^2YT-XY=XY(XT-1)$ and $A:=B/(h)$. Clearly,
$d=\operatorname{gcd}(X^2Y,XY)=XY$, $f_1=X^2Y$ but
$\operatorname{gcd}(d,f_1/d)=\operatorname{gcd}(XY,X)=X\neq1$. Besides,
as Lemma \ref{horizontalvertical} or Lemma \ref{forcingprimedecomposition} shows, the irreducible components of $\Spec A$ are the horizontal component
$V((XT-1)A)$ and the vertical components $V(XA)$ and $V(YA)$. Furthermore,
$V(XA)\cap V(YA) = V((X,Y)A) \neq \emptyset$,
so the two vertical components are connected. Because of
$V(Y A) \cap V((XT-1)A)=V((Y,XT-1)A) \neq \emptyset $
(note also that $V(XA) \cap V((XT-1)A)=V((X,XT-1)A) =  \emptyset $) the condition of
Theorem \ref{connectedcomponents} (or Corollary \ref{factorialconnected}) is fulfilled and hence $\Spec A$ is connected.
However, the condition of a greatest common divisor in Corollary \ref{PID} does not hold.
\end{example}

\begin{example}
\label{behnkekegelexample}
We consider the domain $R=K[X,Y,Z]/(Z^2-XY)$ over a field $K$. This is not a factorial domain, since
$Z^2=XY$
can be written in two ways as a product of irreducible factors.
Accordingly, the rational function $q= \frac{Z}{X} = \frac{Y}{Z}$ is defined
on $D(X,Z)$, and $(X,Z)$ is a prime ideal of height one not given by one element. We look at the forcing
algebra
\[B=R[T]/(XT-Z) \, . \]
The element $XT-Z$ is irreducible in $R[T]$, but not prime. The minimal prime ideals over $(XT-Z)$
are $(XT-Z,Z T-Y)$
(which describes the horizontal component in the spectrum of the forcing algebra $B$, corresponding to the closure
of
the graph of the rational function $q$) and the vertical component $(X,Z)R[T]$. Because of
$(X,Z) + (XT-Z,ZT-Y)=(X,Y,Z)$, these two components intersect and therefore the forcing algebra is connected.
\end{example}

\section{Local properties}

An interesting question is whether the connectedness of $Y=\Spec A$ is a local property over the base $X=\Spec R$.
Specifically, is it true that $Y$ is a connected space if and only if $X$ is connected and for every
$\idealp \in X$, $\Spec A_\idealp$ is  connected, where $A_\idealp $ denotes the localization of $A$ at the
multiplicative system $R\setminus \idealp$ (considered in $A$). The next theorem gives a positive answer to the ``if'' part of this
question in general. For a forcing algebra, the converse holds for a one-dimensional domain, but neither over a reducible curve nor over the affine plane.

\begin{theorem}
\label{theoremlocal}
Let $\psi:R\rightarrow A$ be a ring homomorphism. Set $X:=\Spec R$ and $Y:=\Spec A$. Suppose that $X$ is a
connected space and that for all $\idealp \in X$, $\Spec A_\idealp$ is connected, where
$A_\idealp :=A_{R \setminus \idealp} $, that is, $Y$ is locally (over the base) connected. Then $Y$ is connected.
\end{theorem}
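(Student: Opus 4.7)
The plan is to argue by contradiction: assume $Y = \Spec A$ is disconnected and use the hypothesis on the local spectra $\Spec A_\idealp$ to transfer this disconnection back to a disconnection of $X$, contradicting the connectedness of $X$. The bridge between $Y$ and $X$ will be the idempotent corresponding to a clopen decomposition of $Y$, together with its $R$-annihilator. First I would note that $Y$ is nonempty: each $\Spec A_\idealp$ is connected (hence nonempty in the paper's convention), so at least one $A_\idealp$ is nonzero, forcing $A \neq 0$. A hypothetical nontrivial clopen decomposition $Y = Y_1 \sqcup Y_2$ then corresponds to a product decomposition $A = A_1 \times A_2$ with $A_1, A_2 \neq 0$, equivalently to a nontrivial idempotent $e \in A$ with $A_1 \cong Ae$ and $A_2 \cong A(1-e)$.

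For every $\idealp \in X$, localization commutes with finite products, giving
\[ A_\idealp \,=\, (A_1)_\idealp \times (A_2)_\idealp. \]
Because $\Spec A_\idealp$ is connected and $A_\idealp \neq 0$, exactly one of the two factors must vanish. This yields a partition $X = X_1 \sqcup X_2$ with
\[ X_i \,:=\, \{ \idealp \in X : (A_i)_\idealp \neq 0 \}. \]
The crucial step is to identify these subsets as closed, namely
\[ X_1 \,=\, V(\Ann_R(e)) \quad \text{and} \quad X_2 \,=\, V(\Ann_R(1-e)). \]
This follows because $(A_1)_\idealp = 0$ iff $e = 0$ in $A_\idealp$ iff some $s \in R \setminus \idealp$ annihilates $e$ iff $\Ann_R(e) \not\subseteq \idealp$, and symmetrically for $X_2$.

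Once both $X_i$ are known to be closed, the partition $X = X_1 \sqcup X_2$ exhibits each $X_i$ as the complement of a closed set, so each is clopen. The connectedness of $X$ forces one of them to be empty, say $X_1 = \emptyset$; then $V(\Ann_R(e)) = \emptyset$ forces $\Ann_R(e) = R$ (since any proper ideal is contained in some maximal ideal), whence $e = 0$, contradicting the nontriviality of the idempotent. I expect the only nontrivial step to be the annihilator identification of $X_i$; the rest is the standard dictionary between idempotents and clopen decompositions, together with the fact that localization preserves finite products.
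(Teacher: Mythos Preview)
Your proof is correct and follows the same overall strategy as the paper --- pass to a nontrivial idempotent, take $R$-annihilators, and produce a clopen decomposition of $X$ --- but your organization is cleaner. The paper first reduces to the case where $\psi$ is injective (by showing $\ker\psi\subseteq\operatorname{nil} R$ and passing to the reduced rings); this reduction is used later to deduce $y_1y_2=0$ in $R$ from $\psi(y_1y_2)=0$ in $A$, which in turn yields $X=V(y_1)\uplus V(y_2)$ for suitable $y_i\in J_i=\Ann_R(e_i)$. You bypass this reduction entirely: by invoking the local connectedness hypothesis at \emph{every} prime $\idealp$ (rather than only at a prime containing $J_1+J_2$, as the paper does) you obtain both the covering $X=V(J_1)\cup V(J_2)$ (from $A_\idealp\neq 0$) and the disjointness $V(J_1)\cap V(J_2)=\emptyset$ (from the absence of nontrivial idempotents in $A_\idealp$) in one stroke. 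This makes the injectivity reduction unnecessary and the argument shorter.
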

\begin{proof} 
We first show that we can assume that $\psi$ is injective: first, note that  for any minimal prime
$\idealp\in X$, the space $\Spec A_\idealp$ is not empty, because it is connected (our convention is that the empty set is not connected).
Let $Q \in \Spec A_\idealp$ be a prime ideal, then $\psi^{-1}(Q) $ is a prime ideal of $R$ contained in $\idealp$, because
$\psi^{-1}(Q) \cap (R\setminus \idealp)=\emptyset$, moreover it is equal to $\idealp$ in view of the minimality of $\idealp$.
Therefore, for any minimal prime in $R$, there exists a prime ideal $Q$ in $A$ lying over it. Therefore for any
$a\in \operatorname{ker} \psi$, we know that $\psi(a)=0 \in \cap_{Q \in Y}Q$
and then
$a\in \cap_{Q\in Y} \psi^{-1}(Q)   \subseteq \cap_{\idealp \in \operatorname{min} R}\idealp =\operatorname{nil} R$,
that is $\operatorname{ker} \psi \subseteq \operatorname{nil} R$.

In consideration of this it is enough to reduce to the case of $R$ being reduced.
For this reduction consider the natural homomorphism
$\psi_{\operatorname{red}} :=  R_{\operatorname{red}} \rightarrow A_{\operatorname{red}}$ induced by $\psi$,
killing the nilpotent elements. Now, our hypothesis of locally (over the base $X$) connected and the conclusion
holds for $Y$ if and only if if holds (over the base
$X_{\operatorname{red}}=\Spec (A_{\operatorname{red}})$) for
$Y_{\operatorname{red}}:=\Spec (A_{\operatorname{\operatorname{red}}})$. In fact, clearly
$X\approxeq X_{\operatorname{red}}$ and $Y\approxeq Y_{\operatorname{red}}$ as topological spaces, besides, for
any $\idealp \in X_{\operatorname{red}}$,
$(A_{\operatorname{red}})_\idealp\approxeq A_\idealp/(\operatorname{nil} A)A_\idealp$ and $(\operatorname{nil} A)A_\idealp
\subseteq \operatorname{nil}(A_\idealp)$, hence $\Spec ((A_{\operatorname{red}}))_\idealp \approxeq \Spec (A_\idealp/\operatorname{nil} A_\idealp)
\approxeq \Spec A_\idealp$.
In conclusion, it is enough to prove the theorem in the reduced case for injective $\psi$.

Now, we assume that $Y$ is not connected, which is equivalent to say that there exists nontrivial idempotents
$e_1,e_2\in A$ with $e_1+e_2=1$, $e_1e_2=0$ and $e_1,e_2\neq0,1$. Set $J_i=\operatorname{Ann}_R(e_i)$ for $i=1,2$.
We claim that $J_1+J_2\varsubsetneq R$. Otherwise there exists $y_i\in J_i$ such that $y_1+y_2=1$, and then
$y_1y_2=y_1y_2(e_1+e_2)=y_2(y_1e_1)+y_1(y_2e_2)=0+0=0$. Therefore $X=V(y_1) \uplus V(y_2)$, that is, we can write
$X$ as a disjoint union of two closed subsets, which implies in view of the connectedness of $X$ that one of these
closed subsets is empty, or what is the same, one of the $y_i$ is a unit. Hence, $e_i=y_i^{-1}(y_ie_i)=y_i^{-1}0=0$, a contradiction.

So let $J_1+J_2 \subseteq P$ be a prime ideal. By assumption, $A_{R \setminus \idealp}$ is connected, hence either
$e_1$ or $e_2$ become $0$ in this ring.
This means (in the first case) that there exists $s \in R \setminus \idealp$ such that $se_1 =0 $ in $A$. But
then we get the contradiction $s \in J_1$. In conclusion, $Y$ is a connected space.
\end{proof}

We deal next with the one-dimensional case.

\begin{corollary}
\label{onedimlocal}
Suppose that $R$ is a noetherian domain of dimension $1$.
Let $I=(f_1, \ldots , f_n) \neq 0$ be an ideal, $f \in R$ an element and
$A=R[T_1, \ldots ,T_n]/(f_1T_1 + \ldots +f_nT_n+f)$ 
the forcing algebra for these data. Then $\Spec A$ is connected if and only if $\Spec A$ is locally connected,
i.e. for every prime ideal $\idealp \in \Spec R$ is $A_{R \setminus \idealp}$ connected.
\end{corollary}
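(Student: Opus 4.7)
The plan has two parts. The forward implication (\emph{local connectedness implies global}) is immediate from Theorem~\ref{theoremlocal}: since $R$ is a domain, $\Spec R$ is irreducible and hence connected, so the local hypothesis forces connectedness of $\Spec A$. For the converse, I fix $\idealp \in \Spec R$ and show $\Spec A_\idealp$ is connected; since $\dim R = 1$, the prime $\idealp$ is either $(0)$ or a maximal ideal $\idealm$.

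For $\idealp = (0)$: here $A_{(0)} = A \otimes_R Q(R)$ is a forcing algebra over the field $Q(R)$, and because $I \neq 0$ some coefficient $f_i$ is a unit there; solving the linear equation for the corresponding $T_i$ identifies $A_{(0)}$ with a polynomial ring in $n-1$ variables over $Q(R)$, which is a domain, so $\Spec A_{(0)}$ is irreducible and connected.

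For $\idealp = \idealm$ maximal, my plan is to compare the horizontal/vertical component structure of $\Spec A_\idealm$, as given by Lemma~\ref{horizontalvertical} applied over the one-dimensional local domain $R_\idealm$, with that of $\Spec A$. The horizontal prime of $\Spec A_\idealm$ is exactly the localization $\idealp_H R_\idealm[T_1,\ldots,T_n]$ of the horizontal prime $\idealp_H$ of $\Spec A$ (using $Q(R) = Q(R_\idealm)$), and since $R_\idealm$ is one-dimensional the only possible minimal prime of $(I,f)R_\idealm$ is $\idealm R_\idealm$; by Lemma~\ref{horizontalvertical}(iii) this gives a vertical component of $\Spec A_\idealm$ precisely when $(I,f) \subseteq \idealm$ and $\idealp_H \not\subseteq \idealm R[T_1,\ldots,T_n]$. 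If no vertical component is present, then $\Spec A_\idealm$ has a unique minimal prime, is irreducible, and hence connected. If it is present, then $V(\idealm R[T_1,\ldots,T_n])$ is a genuine vertical component of $\Spec A$, so by the connectedness hypothesis and Corollary~\ref{connectedcomponentsone} there exists a prime $P \in \Spec A$ with $\idealp_H \subseteq P$ and $\idealm R[T_1,\ldots,T_n] \subseteq P$; since $P \cap R \supseteq \idealm$ is maximal, $P \cap R = \idealm$, and $P$ therefore survives the localization at $R \setminus \idealm$, realizing an intersection of the horizontal component with the unique vertical component of $\Spec A_\idealm$. A second application of Corollary~\ref{connectedcomponentsone}, now over $R_\idealm$, then yields connectedness.

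The main (minor) obstacle will be bookkeeping: verifying that $\idealp_H$ indeed localizes to the horizontal prime of $\Spec A_\idealm$, and that the intersection prime $P$ of the last case survives the localization because its contraction to $R$ is the maximal ideal $\idealm$. Both checks are routine contraction/extension computations, and no genuinely new ideas beyond Lemma~\ref{horizontalvertical}, Corollary~\ref{connectedcomponentsone} and Theorem~\ref{theoremlocal} are needed.
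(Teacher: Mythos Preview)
Your proof is correct and follows essentially the same approach as the paper: one direction via Theorem~\ref{theoremlocal}, and for the converse you invoke Corollary~\ref{connectedcomponentsone} to see that the horizontal component meets every vertical component, then observe that the (at most one) vertical component of $\Spec A_\idealm$ is also a vertical component of $\Spec A$ and that the intersection persists after localizing at $R\setminus\idealm$. Your write-up is somewhat more explicit than the paper's about the bookkeeping (the generic-point case, the survival of the intersection prime under localization), but the underlying argument is the same.
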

\begin{proof}
The global property follows from the local property by Theorem \ref{theoremlocal}. So suppose that $\Spec A$ is
connected. By the assumption $I\neq 0$ we know that a horizontal component exists. Hence the fiber over the generic
point $(0)$ is nonempty, thus connected by Lemma \ref{forcingfiber}. The connectedness of $\Spec A$ means by Corollary
\ref{connectedcomponentsone} that the horizontal component meets every vertical component. The vertical components
of $\Spec A_\idealp$ over $\Spec R_\idealp $ for a maximal ideal $\idealp$ in $\Spec R$ are empty or $V(\idealp A)$,
and in the second case they are a vertical component of $\Spec A$. By the intersection condition the horizontal
component and this vertical component (if it exists) intersect, so $\Spec A_\idealp$ is connected.
\end{proof}

The following example shows that for a non-integral one-dimensional base ring, connectedness is not a local
property.

\begin{example}
Let $K$ be a field and let $R=K[X,Y]/(XY(X+Y-1))$. Its spectrum has three line components forming a triangle meeting in $(0,0)$, $(1,0)$ and $(0,1)$.
Consider the forcing algebra
\[ A=R[T]/( (Y+X^2)T - X(X+Y-1) \, .\]
Its spectrum consists in a horizontal line $H_1$ over $X=0$, a horizontal line $H_2$ and one (or two) vertical components
over $X+Y=1$, a vertical line $V$ over $X=Y=0$ and the graph $G$ of the rational function $(X-1)/X$ over $Y=0$.
Because of $G \cap H_2 =\{ (1,0,0) \}$, $H_1 \cap H_2 = (0,1,0)$ and $ H_1 \cap V=(0,0,0) $,
the forcing algebra $A$ is connected. However, the localization of the forcing algebra at $(X,Y)$ is not connected,
because the connecting component $H_2$ is missing (the two connected components are $V \cup H_1$ and $G$).
\end{example}

\begin{corollary}
\label{dedekind}
Suppose that $R$ is a Dedekind domain, i.e. a normal noetherian domain of dimension $1$.
Let $I=(f_1, \ldots , f_n)$ be an ideal, $f \in R$ an element
inside the radical of $I$ and $A=R[T_1 , \ldots ,T_n]/(f_1T_1 + \ldots +f_nT_n+f)$ be
the forcing algebra for these data.
Then the following are equivalent.

\begin{enumerate}
\item 
$\Spec A$ is connected.

\item
$\Spec A$ is locally connected, i.e. for every prime ideal $P \in \Spec R$ is $A_{R \setminus P}$ connected.

\item
$f \in I$.
\end{enumerate}
\end{corollary}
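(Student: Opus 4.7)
The plan is to reduce (1) $\Leftrightarrow$ (3) to a local valuation computation by combining Corollary~\ref{onedimlocal} with Corollary~\ref{PID}. First I dispose of the degenerate case $I=0$: then $f\in\sqrt{(0)}=(0)$ forces $f=0$, so $A$ is a polynomial ring and all three conditions hold. From now on assume $I\neq 0$; the equivalence (1) $\Leftrightarrow$ (2) is then precisely Corollary~\ref{onedimlocal}, so the content is (2) $\Leftrightarrow$ (3).

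For (2) $\Leftrightarrow$ (3) I would work one prime at a time. At the generic point the localization $A\otimes_R Q(R)$ is cut out by a nonzero linear polynomial (since $I\neq 0$), hence is a domain, so connected. For a maximal ideal $\idealp$ the ring $R_\idealp$ is a DVR (hence a PID), and $A_{R\setminus\idealp}$ is the forcing algebra over $R_\idealp$ for the data $(f_1,\ldots,f_n,f)$, so Corollary~\ref{PID} applies: connectedness is equivalent to $\gcd(d,f_1/d,\ldots,f_n/d)=1$ in $R_\idealp$, where $d$ is a gcd of $f_1,\ldots,f_n,f$. Writing $v=v_\idealp$, $m=\min(v(f_1),\ldots,v(f_n),v(f))$ and $m'=\min(v(f_1),\ldots,v(f_n))$, the valuation of this gcd is $\min(m,v(f_1)-m,\ldots,v(f_n)-m)$, which vanishes if and only if $m=m'$, equivalently $v(f)\geq m'$, equivalently $f\in IR_\idealp$.

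The delicate point is the case $v(f)=0$ with $m'>0$, where the formula for the gcd would vanish spuriously. This is precisely where the hypothesis $f\in\sqrt{I}$ enters: if $f^N\in I$ then $Nv(f)\geq m'$, so $v(f)=0$ forces $m'=0$, contradicting $m'>0$. Hence the spurious case never occurs and
\[ \Spec A_{R\setminus\idealp}\text{ connected}\ \Longleftrightarrow\ f\in IR_\idealp \]
holds at every maximal $\idealp$. I would then conclude by invoking the local-global principle for ideals ($f\in I$ iff $f\in IR_\idealp$ for every maximal $\idealp$), which is especially clean in a Dedekind domain via ideal factorization. The main obstacle is the valuation bookkeeping, and in particular pinpointing where the radical assumption is exactly what is needed to rule out the spurious case; the rest is assembly of previously proved corollaries.
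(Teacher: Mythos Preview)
Your proof is correct and follows the same overall strategy as the paper: reduce to the local DVR case via Corollary~\ref{onedimlocal}, then settle the question by a valuation computation, with the radical hypothesis used exactly to exclude the case $v(f)=0<m'$. The packaging differs slightly. You route the DVR step through the gcd criterion of Corollary~\ref{PID} and then translate that criterion into $v(f)\ge m'$; the paper instead writes the forcing equation directly as $p^k(u_1p^{k_1-k}T_1+\cdots+u_np^{k_n-k}T_n+u)$ and reads off that for $k<m'$ the vertical component $V(p)$ and the horizontal component are disjoint. For the implication $(3)\Rightarrow(1)$ you stay inside the gcd computation (if $f\in IR_\idealp$ then $m=m'$ and the gcd is $1$), whereas the paper invokes the section argument of Proposition~\ref{connectedprop}(4) directly. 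Your approach is a bit more modular in that it reuses Corollary~\ref{PID}; the paper's is more self-contained. One small phrasing issue: your claim that $\min(m,m'-m)=0$ ``if and only if $m=m'$'' is literally false when $m=0<m'$, but you immediately identify and dispose of that case, so the logic is sound.
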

\begin{proof}
The equivalence between (1) and (2) follows from Corollary \ref{onedimlocal}. For the equivalence with (3) we may
assume that $R$ is local, i.e. a discrete valuation domain. Let $p$ be a generator of its maximal ideal. We may
assume at once that $I \neq 0$, because else $f=0$ due to the radical assumption, and also that all $f_i$
(If $f_i=0$ we can omit this without changing any property) and $f$
are not $0$. We write
$f_i= u_i p^{k_i}$ and $f=up^k$ with units $u_i,u$. 
Assume that $f \not\in I$. Then $k < \operatorname{min}(k_1, \ldots, k_n)$. We write the forcing equation as
\[ p^k (u_1p^{k_1-k} T_1 + \ldots +u_np^{k_n-k}T_n+u)\, ,  \]
where the exponents $k_i-k$ are all positive. Because of the radical assumption we have $k \geq 1$. But then the
forcing algebra has the two components $V(p)$ and $V(u_1p^{k_1-k} T_1 + \ldots +u_np^{k_n-k}T_n+u)$ which are
disjoint. The other direction follows from Proposition \ref{connectedprop} (4).
\end{proof}

For a non-normal one-dimensional domain this equivalence can never be true because of Corollary \ref{normalization} below.
The next trivial example shows that this statement is also not true without the radical assumption.

\begin{example}
For $R=K[X]$, $K$ a field, the forcing algebra $K[X,T]/(0T-X) \cong K[T]$ is connected,
but the fiber over the generic point is empty, hence not connected.
\end{example}

In higher dimension, even for a factorial domain, the converse of Theorem \ref{theoremlocal} is also not true.

\begin{example}
\label{connectedexampletwo}
We continue with Example \ref{connectedexample}, i.e. $R:=K[X,Y]$ is the ring of polynomials in two variables over
a field $K$, $B:=R[T]$, $h:=X^2YT-XY$ and $A:=B/(h)$. The morphism $\Spec A \rightarrow \Spec R$ is surjective. We
now already that $\Spec A$ has the three irreducible
components $V(X)$, $V(Y)$ and $V(XT-1)$ that it is connected. However, if we localize $A$ in
$\idealp:=(X)\in \Spec R$,
that is, if we consider the ring $A_\idealp = A_{ R\setminus \idealp  }$, then $\Spec A_\idealp$ has just two
irreducible components, namely, $V((X)A_\idealp)$ and $V((XT+1)A_\idealp)$, because the minimal primes of
$A_\idealp$ are just $(X)A_\idealp$ and $(XT+1)A_\idealp$, since the remaining minimal prime ideal $(Y)$ meets
$R\setminus \idealp$, since $Y \in R\setminus \idealp $. Moreover, these two irreducible components are disjoint,
because
$V((X)A_\idealp))\cap V((XT-1)A_\idealp)=V((X,XT-1)A_\idealp)=V((1))=\emptyset$. In conclusion, $\Spec A_\idealp$
is not connected.
\end{example}

\section{Integral closure and connectedness}
\label{integral}

In this final section we relate the integral closure of an ideal to the universal connectedness of the forcing algebra. Recall that the \emph{integral closure}, written $\overline{I}$, of an ideal $I \subseteq R$ is defined as
\[ \overline{I} := \{f \in R| f^n +a_{1} f^{n-1} + \ldots +a_{n-1} f^1 +a_n =0 \text{ where } a_j \in I^j\} \, .\]
For a noetherian domain, there exists a (discrete) valuative criterion for the integral closure: The containment $f \in \overline{I}$ holds if and only
if for all ring homomorphisms $\theta:R \rightarrow D$ to a discrete valuation domain $D$
we have $\theta(f) \in ID$, see \cite[Theorem 6.8.3]{hunekeswanson}.

\begin{definition}
Let $\varphi:Y\rightarrow X$ be a morphism between affine schemes. We say that $\varphi$ is a universally connected if $W\times_X Y$ is 
connected for any affine noetherian change of base $W\rightarrow X$, with $W$ connected.
\end{definition}

Now, we prove a criterion for belonging to the integral closure in terms of the universal connectedness of the corresponding forcing morphism.
\begin{theorem}
Let $A$ be a forcing algebra over a noetherian ring $R$ and $\varphi:Y:={\rm Spec}\, A\rightarrow X:={\rm Spec}\, R$ the corresponding forcing morphism. Then the 
following conditions are equivalent:
\begin{enumerate}
\item $f$ belongs to the integral closure of $I$, i.e. $f\in \overline{I}$.
\item $\varphi$ is a universal submersion.
\item $\varphi$ is universally connected.
\item $W\times_X Y$  is connected for all change of base of the form $W={\rm Spec}\,D$, where $D$ is a discrete valuation domain.
\end{enumerate}

\end{theorem}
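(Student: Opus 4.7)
The plan is to establish the cycle of implications $(1) \Rightarrow (2) \Rightarrow (3) \Rightarrow (4) \Rightarrow (1)$. The equivalence $(1) \Leftrightarrow (2)$ is the classical characterization of the integral closure of an ideal by universal submersion of the forcing morphism (due to Hochster and Brenner), which I would invoke from \cite{brennerforcingalgebra} rather than re-prove. For $(2) \Rightarrow (3)$, given any noetherian connected affine $W \to X$, the base-changed morphism $W \times_X Y \to W$ is again a forcing morphism (forcing algebras commute with arbitrary change of base) and is a submersion because $\varphi$ is universal; Proposition~\ref{connectedprop}(5) then yields that $W \times_X Y$ is connected. Finally, $(3) \Rightarrow (4)$ is immediate since $\Spec D$ is local, hence connected.

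The main content is $(4) \Rightarrow (1)$, which I would prove by contrapositive. Suppose $f \notin \overline I$. Since integral closure is compatible with quotients by minimal primes, there is a minimal prime $P \subseteq R$ with $\bar f \notin \overline{\bar I}$ in the noetherian domain $R/P$; any DVR exhibiting failure of connectedness over $R/P$ pulls back to one over $R$, so I may replace $R$ by $R/P$ and assume outright that $R$ is a noetherian integral domain.

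If $I = 0$, then $\overline I = 0$ and $f \neq 0$. The localization $R[f^{-1}]$ is a nonzero noetherian domain, which embeds into some DVR $V$ (either $V = R[f^{-1}][[t]]$ when $R[f^{-1}]$ is a field, or a DVR localization of its normalization at a height-one prime via Krull--Akizuki). In $V$ the element $f$ is a unit, so the base-changed forcing algebra $V[T_1, \ldots, T_n]/(f)$ is the zero ring; its spectrum is empty and by convention not connected, contradicting~(4). If $I \neq 0$, I apply the valuative criterion \cite[Theorem 6.8.3]{hunekeswanson} to obtain a DVR $V \subseteq Q(R)$ containing $R$ with $f \notin IV$; write $\pi$ for its uniformizer and $k = V/(\pi)$ for its residue field. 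If $f$ is not a unit in $V$, then $f \in (\pi)$, and since $IV$ is a nonzero proper ideal of the DVR $V$ one has $(\pi) = \sqrt{IV}$; Corollary~\ref{dedekind} applied over the Dedekind domain $V$ then yields that the base-changed forcing algebra is disconnected, contradicting~(4). If instead $f$ is a unit in $V$, I compose with $V \twoheadrightarrow k \hookrightarrow k[[s]]$ to obtain a new map $\theta' \colon R \to k[[s]]$ under which every $f_i$ maps to $0$ (because $f_i \in IV \subseteq (\pi)$) while $f$ maps to a unit; the base-changed forcing algebra collapses once more to the zero ring, again contradicting~(4).

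The main obstacle is this last subcase, where the DVR supplied by the valuative criterion makes $f$ a unit: Corollary~\ref{dedekind} does not apply (its radical hypothesis fails), and one must replace the given DVR by an auxiliary one---the residue field composed with a power-series DVR---in which the ideal part is killed off while the constant term survives as a unit. The analogous Krull--Akizuki-type construction in the $I = 0$ case rests on the standard fact that every nonzero noetherian domain embeds into some DVR, which is what allows the argument to handle all remaining degenerate configurations.
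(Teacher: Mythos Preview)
Your proof is correct and relies on the same three ingredients as the paper's: the valuative criterion for integral closure, Corollary~\ref{dedekind}, and the power-series DVR trick. The differences are purely organizational. First, the paper gives a short self-contained argument for $(1)\Rightarrow(2)$ rather than invoking it: one reduces via \cite[Remarque 2.6]{SGA1} to checking submersion after base change to a DVR $D$, where $f\in ID$ provides a section and hence a submersion. Second, and more interestingly, the paper's $(4)\Rightarrow(1)$ is direct rather than contrapositive and avoids your reduction to a domain and your case split $I=0$ versus $I\neq 0$, $f$ a unit versus not. Given any map $R\to D$ to a DVR, the paper first establishes $f\in\operatorname{rad}(ID)$ uniformly: if the fiber of $\varphi_D$ over some $\mathfrak{p}\in\Spec D$ were empty, then composing through $\kappa(\mathfrak{p})\hookrightarrow\kappa(\mathfrak{p})[[x]]$ yields a DVR base change with empty (hence disconnected) pull-back, contradicting~(4). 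Once $f\in\operatorname{rad}(ID)$ is known, Corollary~\ref{dedekind} applies directly and gives $f\in ID$. This single $\kappa(\mathfrak{p})[[x]]$ step absorbs both your $I=0$ case and your ``$f$ a unit in $V$'' subcase, so the paper's route is shorter; your case analysis, while correct, is doing by hand what the radical-first argument handles in one stroke.
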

\begin{proof}
$(1)\Rightarrow(2)$. Recall that a submersion is universal if it remains a submersion under noetherian change of base.
 Due to the valuative criterion for the integral closure  (see \cite[Theorem 6.8.3]{hunekeswanson})
and the fact that (2) can be checked after change of base to a discrete valuation domain $D$ (see \cite[Remarque 2.6]{SGA1}),
we can assume that $R=D$ and that $f\in I$ and we have to prove that $\varphi:{\rm Spec}\, A \rightarrow {\rm Spec}\, D$ is a 
submersion. But $f\in I$ if and only if there exists a section $s:{\rm Spec}\,D\rightarrow {\rm Spec}\,A$, i.e. 
$\varphi\circ s=\operatorname{Id}_{{\rm Spec}\,D}$. By an elementary topological argument the existence of a section implies that
$\varphi$ is a submersion.

$(2)\Rightarrow (3)$. Let $W\rightarrow X$ be an affine noetherian connected change of base, then since $\varphi_W:W\times_XY\rightarrow W$ is 
a submersion, by Proposition \ref{connectedprop} (5) $W\times_X Y$ is connected.

$(3)\Rightarrow(4)$ is trivial.

$(4)\Rightarrow (1)$. Let $ W={\rm Spec}\, D  \rightarrow X  $ be a change of base, where $D$ is a 
discrete valuation domain. By the valuative criterion for integral closure it is enough to show that $f\in ID$. First, note that $f\in {\rm rad}(ID)$, 
which is equivalent to say that $\varphi_W: W\times_X Y \rightarrow W$ is surjective (see \cite{brennerforcingalgebra}).
If the fiber over $\fop \in W$ were empty,
then the morphism
\[ \Spec \kappa(\fop)[[x]] \longrightarrow  \Spec \kappa(\fop) \longrightarrow W \longrightarrow X\]
would yield a contradiction, since then the pull-back $ (\Spec \kappa(\fop)[[x]] )  \times_XY $ would be empty (hence not connected)
and $\kappa(\fop)[[x]]$ is a discrete valuation domain. Thus $f\in {\rm rad}(ID)$ and by Corollary \ref{dedekind} the
connectedness of $W\times_XY$ is equivalent to $f\in ID$.
\end{proof}

Now, we prove a corollary of this theorem charaterizing the property that a fraction belongs to the integral closure (or normalization) of an 
integral domain. 
\begin{corollary}
\label{normalization}
Let $R$ be an integral domain, $K=Q(R)$ its field of fractions. Let $r/s\in K$ with $s\neq 0$, let $A=R[T]/(sT+r)$ be the forcing algebra and 
$\varphi:Y:={\rm Spec}\, A\rightarrow X:={\rm Spec}\, R$ the corresponding morphism. Then $r/s$ is integral over $R$ if and only if $\varphi$
is universally connected.
\end{corollary}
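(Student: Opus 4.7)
The plan is to reduce the corollary to the preceding theorem applied to the forcing data $I=(s)$, $f=r$, which gives exactly $A=R[T]/(sT+r)$. What has to be supplied is the classical algebraic translation
\[ r/s \text{ is integral over } R \iff r \in \overline{(s)} . \]
Once this equivalence is in place, the preceding theorem (specifically the equivalence of conditions (1) and (3) there) yields the statement immediately.

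For the translation, I would argue as follows. If $r/s$ satisfies a monic relation $(r/s)^n + a_1 (r/s)^{n-1} + \cdots + a_n = 0$ with $a_i \in R$, clearing denominators by multiplying through by $s^n$ gives
\[ r^n + (a_1 s)\, r^{n-1} + \cdots + (a_n s^n) = 0, \]
and since $a_i s^i \in (s)^i$, this is exactly an integral dependence of $r$ over the ideal $(s)$, so $r \in \overline{(s)}$. Conversely, if $r^n + b_1 r^{n-1} + \cdots + b_n = 0$ with $b_i \in (s)^i = (s^i)$, write $b_i = s^i c_i$ with $c_i \in R$; substituting and dividing by $s^n$ (valid in $K$, since $s \neq 0$ and $R$ is a domain) gives a monic polynomial equation for $r/s$ over $R$.

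With this translation, the corollary reads: $r \in \overline{(s)}$ if and only if the forcing morphism of $R[T]/(sT+r)$ is universally connected, which is precisely the equivalence (1) $\Leftrightarrow$ (3) of the previous theorem specialised to the one-generator case. No further work is needed.

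The only subtle point—really not an obstacle, but worth noting—is the use of $R$ being a domain in passing from $b_i = s^i c_i$ to uniqueness/coherence of the $c_i$'s and in dividing by $s^n$ in $K$; both are justified by the hypothesis that $R$ is an integral domain with $s \neq 0$, so multiplication by $s$ is injective. Everything else is a direct appeal to the theorem preceding the corollary.
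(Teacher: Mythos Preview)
Your proof is correct and follows exactly the paper's own argument: reduce to the equivalence $r/s$ integral over $R$ $\iff$ $r\in\overline{(s)}$, then invoke the preceding theorem's equivalence (1)$\Leftrightarrow$(3). The paper simply calls the first step ``an elementary fact'' without spelling it out, whereas you have written out the standard clearing-denominators argument in both directions; otherwise the two proofs are identical.
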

\begin{proof}
It is an elementary fact that $r/s$ is integral over $R$ if and only if $r\in \overline{(s)}$. Then by the former theorem 
$r\in \overline{(s)}$ if and only if $\varphi$ is universally connected.
\end{proof}

In our final example we show that a forcing algebra over a non-normal curve might be connected but not universally connected.
In fact the pull-back to the normalization is already not connected.

\begin{example}
Let $K$ be a field and consider the ring-homomorphism $K[u,v]  \rightarrow K[x],\, u \mapsto x(x-1),\, v \mapsto x^2(x-1)$.
The kernel of this is $(u^3+uv-v^2)$. Let $R= K[u,v]/(u^3+uv-v^2)$. Because of $v/u=x$ the induced homomorphism
$ K[u,v]/(u^3+uv-v^2) \rightarrow K[x] $ is the normalization. We consider the forcing algebra $A=R[T]/(vT+u)$.
It consists of a horizontal component given by $V(vT+u,vT^3 +T+1)$ (check that this is a prime ideal) and the
vertical component $V(u,v)$. They intersect in $V(u,v,T+1)$, hence the forcing algebra is connected.
When we pull-back this situation to the normalization
we get
\[A' =K[x][T]/( x^2(x-1)T + x(x-1) ) \cong K[x][t]/(x(x-1) ( xT + 1) ) \, .\]
Now we have one horizontal component and two vertical components, and the horizontal hyperbola meets exactly one of them,
hence this forcing algebra is not connected by Corollary \ref{connectedcomponentsone}.
\end{example}

\bibliographystyle{amsplain}
\bibliography{bibliothek}

\end{document}